\documentclass[11pt]{article}
\usepackage[margin=2.5cm]{geometry}
\usepackage{graphicx}

\usepackage[pagebackref, pdftex]{hyperref}

\renewcommand*{\backref}[1]{}
\renewcommand*{\backrefalt}[4]{
  \ifcase #1
  [No citations.]
  \or [#2]
  \else [#2]
  \fi }

\usepackage{amsmath}
\usepackage{amssymb}
\usepackage{amsthm}
\usepackage{cite}
\usepackage{tikz}
\usepackage{pgfplots}
\usepackage{bbm}
\usepackage{authblk}

\makeatletter
\renewcommand{\qed}{$\hfill\square$}

\numberwithin{equation}{section}

\newtheorem{theorem}[equation]{Theorem}

\newtheorem{lemma}[equation]{Lemma}
\newtheorem{lem}[equation]{Lemma}

\newtheorem{prop}[equation]{Proposition}

\newtheorem{definition}[equation]{Definition}

\newtheoremstyle{named}{}{}{\itshape}{}{\bfseries}{.}{.5em}{\thmnote{#3} #1}
\theoremstyle{named}
\newtheorem*{namedtheorem}{Theorem}

\newcommand{\refsec}[1]{Section~\ref{Sec:#1}}
\newcommand{\refdef}[1]{Definition~\ref{Def:#1}}
\newcommand{\reffig}[1]{Figure~\ref{Fig:#1}}

\newcommand{\refeqn}[1]{\eqref{Eqn:#1}}
\newcommand{\reflem}[1]{Lemma~\ref{Lem:#1}}
\newcommand{\refprop}[1]{Proposition~\ref{Prop:#1}}
\newcommand{\refthm}[1]{Theorem~\ref{Thm:#1}}

\DeclareMathOperator{\Isom}{Isom}
\let\Im\relax
\DeclareMathOperator{\Im}{Im}
\let\Re\relax
\DeclareMathOperator{\Re}{Re}
\newcommand{\R}{\mathbb{R}}
\newcommand{\C}{\mathbb{C}}
\newcommand{\Z}{\mathbb{Z}}
\newcommand{\hyp}{\mathbb{H}}
\newcommand{\D}{\mathbb{D}}
\newcommand{\U}{\mathbb{U}}
\newcommand{\To}{\longrightarrow}

\begin{document}

\title{Spinors and Descartes' Theorem}

\author{Daniel V. Mathews\thanks{Daniel.Mathews@monash.edu} }

\author{Orion Zymaris\thanks{Orion.Zymaris@monash.edu}}

\affil{School of Mathematics,
Monash University,
VIC 3800, Australia}

\maketitle

\begin{abstract}
Descartes' circle theorem relates the curvatures of four mutually externally tangent circles, three ``petal" circles around the exterior of a central circle, forming a ``$3$-flower" configuration. We generalise this theorem to the case of an ``$n$-flower", consisting of $n$ tangent circles around the exterior of a central circle, and give an explicit equation satisfied by their curvatures. The proof uses a spinorial description of horospheres in hyperbolic geometry.
\end{abstract}
\section{Introduction}
\emph{Descartes' theorem} is a classical result in 2-dimensional Euclidean geometry, relating the curvatures of four mutually tangent circles (\reffig{3Flower}) which form a \emph{3-flower} in the following sense.
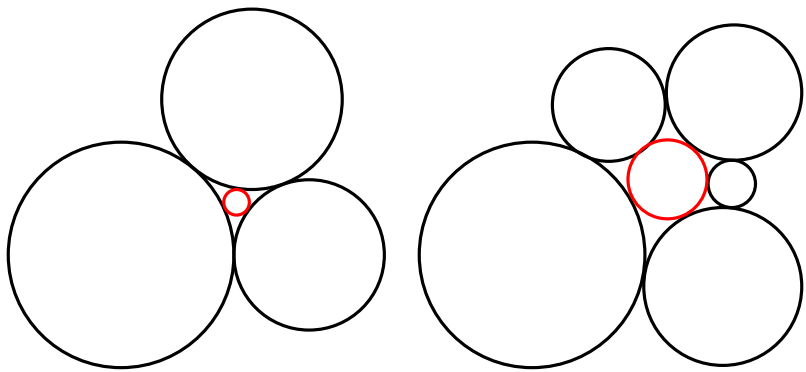
\begin{figure}[!h]
\centering
\begin{tabular}{cc}
\begin{tikzpicture}[scale=1]
\draw[very thick](0,0) circle (1.5);
\draw[very thick](2.5,0) circle (1);
\draw[very thick](1.74,2.07) circle (1.2);
\draw[very thick, red] (1.535,0.7) circle (0.17);
\end{tikzpicture}
&
\begin{tikzpicture}[scale=1.5]
\draw[very thick](0.47,0.01) circle (1);
\draw[very thick](2.16,-0.27) circle (0.7);
\draw[very thick](2.26,1.45) circle (0.6);
\draw[very thick](1.15,1.34) circle (0.5);
\draw[very thick](2.24,0.64) circle (0.21);
\draw[very thick, red] (1.67,0.68) circle (0.35);
\end{tikzpicture}
\end{tabular}
\caption{Left, a $3$-flower. Right, a $5$-flower.}
\label{Fig:3Flower}
\end{figure}
\begin{definition}
\label{Def:flower}
Let $n \geq 3$. An \emph{$n$-flower} consists of a central circle $C_\infty$, and $n$ \emph{petal} circles $C_j$, over integers $j$ mod $n$, so that the $C_j$ are externally tangent to $C_\infty$ in order around $C_\infty$, and each $C_j$ is externally tangent to $C_{j-1}$ and $C_{j+1}$.
\end{definition}
Throughout, we denote the curvature of a circle $C_\bullet$ by $\kappa_\bullet$. Descartes' theorem gives an equation satisfied by the curvatures in a 3-flower:
\begin{equation}
\label{Eqn:Descartes}
(\kappa_{\infty}+\kappa_1+\kappa_2+\kappa_3)^2=2(\kappa_{\infty}^2+\kappa_1^2+\kappa_2^2+\kappa_3^2).
\end{equation}
In this paper we present an explicit equation satisfied by the curvatures of an $n$-flower. 
\begin{namedtheorem}[Generalised Descartes]
\label{Thm:GeneralisedDescartes}
Let the circles $C_\infty$ and $C_j$ ($j \in \Z/n\Z$)  in an $n$-flower have curvatures $\kappa_\infty, \kappa_j$ respectively. Define $m_0$ and $m_j$ for $1 \leq j \leq n-1$ as
\begin{equation}
    \label{Eqn:mDefn}
m_0 = \sqrt{\frac{\kappa_0}{\kappa_{\infty}}+1}, \quad
m_j = \sqrt{\left(\frac{\kappa_j}{\kappa_{\infty}}+1\right)\left(\frac{\kappa_{j-1}}{\kappa_{\infty}}+1\right)-1}.
\end{equation}
Then for odd $n$, the following holds:
\begin{equation}
\label{Eqn:mquationOdd}
\frac{m_0^2 \, i}{2}\left(\prod_{j=1}^{n-1}(m_j-i) - \prod_{j=1}^{n-1}(m_j+i) \right)-\prod_{j=1}^{\frac{n-1}{2}} \left( m_{2j-1}^2+1 \right)=0.
\end{equation}
For even $n$, the following holds:
\begin{equation}
\label{Eqn:mquationEven}
\frac{i}{2}\left(\prod_{j=1}^{n-1}(m_j-i) - \prod_{j=1}^{n-1}(m_j+i) \right)-\prod_{j=1}^{\frac{n-2}{2}} \left( m_{2j}^2+1 \right) =0.
\end{equation}
\end{namedtheorem}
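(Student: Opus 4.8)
The plan is to reduce the whole statement to the single geometric fact that the flower closes up --- namely that the angles subtended at the centre of $C_\infty$ by consecutive petals sum to $2\pi$ --- and then to repackage this as a statement about a product of complex numbers. Write $a_j = \kappa_j/\kappa_\infty + 1$, so that $m_0 = \sqrt{a_0}$ and $m_j = \sqrt{a_{j-1}a_j - 1}$, with indices mod $n$ and $a_n = a_0$. Let $O$ and $O_j$ denote the centres of $C_\infty$ and $C_j$, and for each $j$ let $\phi_j = \angle(O_{j-1}, O, O_j)$; closure of the flower is precisely $\sum_{j=1}^n \phi_j = 2\pi$. Using the external tangencies $|OO_j| = \kappa_\infty^{-1} + \kappa_j^{-1}$ and $|O_{j-1}O_j| = \kappa_{j-1}^{-1} + \kappa_j^{-1}$, the law of cosines in the triangle $OO_{j-1}O_j$ gives, after simplification, $\sin(\phi_j/2) = (a_{j-1}a_j)^{-1/2}$ and $\cos(\phi_j/2) = m_j\,(a_{j-1}a_j)^{-1/2}$. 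In the spinorial framework these same identities appear as lambda lengths: each petal carries a decorated horosphere read off from $(\kappa_j,\kappa_\infty)$, and $m_j + i$ is the resulting complex pairing between consecutive petals, with modulus $\sqrt{a_{j-1}a_j}$ recording tangency and argument $\operatorname{arccot} m_j = \phi_j/2$ recording the turning.

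The key step is then to observe that $\cos(\phi_j/2) + i\sin(\phi_j/2) = (m_j + i)(a_{j-1}a_j)^{-1/2}$, so multiplying over the whole flower and invoking closure gives
\[ \prod_{j=1}^n (m_j + i) = \Big(\prod_{j=1}^n \sqrt{a_{j-1}a_j}\,\Big)\, e^{\,i\sum_{j=1}^n \phi_j/2} = A\,e^{i\pi} = -A, \]
where $A = \prod_{k=0}^{n-1} a_k$, since each $a_k$ occurs exactly twice in the telescoping product of square roots. Thus closure of the flower is equivalent to the clean relation $\prod_{j=1}^n (m_j + i) = -A \in \R$.

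It remains to recast this symmetric relation in the asymmetric, parity-split form of the theorem. Split off the last factor and use the identity $m_j^2 + 1 = a_{j-1}a_j$ (in particular $m_n^2 + 1 = a_{n-1}a_0$). Setting $P_+ = \prod_{j=1}^{n-1}(m_j + i)$ and $P_- = \overline{P_+} = \prod_{j=1}^{n-1}(m_j - i)$, the relation $(m_n + i)P_+ = -A$ together with its complex conjugate yields $\Im P_+ = A/(a_{n-1}a_0)$, that is $\tfrac{i}{2}(P_- - P_+) = A/(a_{n-1}a_0)$. On the other hand, pairing consecutive indices in $m_j^2+1 = a_{j-1}a_j$ telescopes the stated products: $\prod_{j=1}^{(n-2)/2}(m_{2j}^2 + 1) = a_1 a_2 \cdots a_{n-2} = A/(a_0 a_{n-1})$ for even $n$, and $\prod_{j=1}^{(n-1)/2}(m_{2j-1}^2 + 1) = a_0 a_1 \cdots a_{n-2} = A/a_{n-1}$ for odd $n$. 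Substituting these, and using $m_0^2 = a_0$, turns the closure relation into exactly \refeqn{mquationEven} in the even case and \refeqn{mquationOdd} in the odd case.

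I expect the geometry of the first paragraph to be routine (either by elementary trigonometry or, in the paper's intended spinor formulation, by identifying $m_j + i$ as a horosphere pairing), and the complex-product identity to be the decisive, clean idea. The main obstacle is really the last step: the theorem's expressions are not cyclically symmetric --- they single out the seam petal $C_0$ through $m_0$ and split according to the parity of $n$ --- whereas the underlying relation $\prod_{j=1}^n(m_j+i) = -A$ is symmetric. The work lies in the careful index and parity bookkeeping needed to telescope $A$ into the stated one-sided products, and in recognising that the apparently ad hoc factor $m_0^2$ in the odd case is exactly the $a_0$ restored after dropping the $j=n$ factor. Keeping track of the square-root branches (all $m_j \ge 0$ and all $a_j > 0$, so that the modulus and argument of $m_j + i$ are unambiguous) is the one remaining place where care is needed.
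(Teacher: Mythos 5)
Your proposal is correct --- I checked the trigonometry, the product identity, and the parity bookkeeping, and all steps hold --- but it takes a genuinely different route from the paper. The paper never looks at central angles: it inverts the flower in $C_\infty$ to obtain horocycles internally tangent to the unit circle, transports them to the upper half-plane, and invokes the spinor--horosphere correspondence, so that each petal acquires a real spinor $(\xi_j,\eta_j)$ with tangency encoded by $\xi_{j-1}\eta_j-\xi_j\eta_{j-1}=-1$ and curvature (after normalising $\kappa_\infty=1$) by $\kappa_j=\xi_j^2+\eta_j^2-1$; setting $z_j=\xi_j+i\eta_j$ it shows $z_{j-1}\overline{z_j}=m_j+i$, telescopes the products $\prod_{j=1}^{n-1}(m_j\pm i)$, and finishes using the wrap-around tangency between the first and last horocycles. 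You replace all of that machinery with the law of cosines in the triangles $O O_{j-1} O_j$ and the closure condition $\sum_{j=1}^n\phi_j=2\pi$. The two arguments are secretly the same computation: your half-angle $\phi_j/2$ equals $\arg\left(z_{j-1}\overline{z_j}\right)$, the angle between consecutive spinor vectors (the halving of angles is exactly the spin double cover at work), and the paper's telescoping product encodes your angle sum. What your route buys: it is elementary and self-contained, needs no normalisation of $\kappa_\infty$, and isolates the cyclically symmetric identity $\prod_{j=1}^n(m_j+i)=-\prod_{k=0}^{n-1}a_k$ (with your $a_j=\kappa_j/\kappa_\infty+1$ and $m_n$ the wrap-around quantity), from which both \refeqn{mquationOdd} and \refeqn{mquationEven} follow by pure algebra --- in particular it demystifies the factor $m_0^2$ in the odd case as the $a_0$ restored after discarding the $j=n$ factor. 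What the paper's route buys: the spinor formalism is the point of the paper, and it places the theorem in a framework (complex lambda lengths, $SL(2,\C)$-equivariance) that extends to three dimensions and to configurations where plane trigonometry has no purchase.

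Two small points of care, neither a gap. Your parenthetical claim that the product relation is \emph{equivalent} to closure overstates things: since each $\phi_j/2\in(0,\pi/2)$, the relation $\prod_{j=1}^n(m_j+i)\in\R_{<0}$ only forces $\sum_j\phi_j\equiv 2\pi\pmod{4\pi}$, so for larger $n$ it does not by itself pin down the winding; your proof, however, uses only the implication from closure to the relation, which is all that is needed. Second, the closure condition $\sum_j \phi_j = 2\pi$ is precisely where the hypothesis of \refdef{flower} that the petals sit in order around $C_\infty$ enters: it guarantees the rotations from $OO_{j-1}$ to $OO_j$ all have the same sense and wind exactly once, while nondegeneracy of the triangles $OO_{j-1}O_j$ puts each $\phi_j$ in $(0,\pi)$ and makes your sign and branch choices unambiguous. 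A final write-up should say both of these explicitly.
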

In other words, from the curvatures $\kappa_\bullet$ we define auxiliary variables $m_\bullet$ via \refeqn{mDefn}, and the $m_\bullet$ satisfy polynomial equations. Since all $\kappa_\bullet > 0$, each $m_\bullet$ is the square root of a manifestly positive number and we take $m_\bullet$ to be the positive square root. 

In \refeqn{mquationOdd} and \refeqn{mquationEven}, $i$ is the usual  square root of $-1$. Although complex numbers are crucial to the proof, in writing these equations they are merely a convenience. After expanding and cancelling terms, the resulting polynomials have integer coefficients. Indeed, writing $[n]$ for $\{1, 2, \ldots, n\}$, then for any subset $K \subseteq [n-1]$, the term in the product $\prod_{j=1}^{n-1} (m_j \pm i)$ involving precisely the $m_k$ with  $k \in K$ is  given by $(\pm i)^{n-1-|K|} \prod_{k \in K} m_k$. Such terms come in pairs, one from $\prod_{j=1}^{n-1} (m_j + i)$ and one from $\prod_{j=1}^{n-1} (m_j - i)$. When $n-1-|K|$ is even, the terms in each pair are real, equal and  cancel; when $n-1-|K|$ is odd, say equal to $2l+1$, then the terms in each pair are imaginary and conjugate, so since  $\frac{i}{2} \left( (-i)^{n-1-|K|} - i^{n-1-|K|} \right) = (-1)^l$, 
\refeqn{mquationOdd} and \refeqn{mquationEven} can be written  as
\begin{equation}
\label{Eqn:mquationOdd_alt}
m_0^2  \sum_{\stackrel{K \subseteq [n-1]}{|K| = n-2l-2}}  (-1)^l \prod_{k \in K}  m_k = \prod_{j=1}^{\frac{n-1}{2}} \left( m_{2j-1}^2 + 1 \right) 
\end{equation}
and
\begin{equation}
\label{Eqn:mquationEven_alt}
\sum_{\stackrel{K \subseteq [n-1]}{|K|=n-2l-2}} (-1)^l \prod_{k \in K} m_k = \prod_{j=1}^{\frac{n-2}{2}} \left( m_{2j}^2+1 \right)
\end{equation}
respectively.

In any case, making the substitutions of  \refeqn{mDefn} in \refeqn{mquationOdd} or \refeqn{mquationEven} (or \refeqn{mquationOdd_alt} or \refeqn{mquationEven_alt}) yields an equation satisfied by the $\kappa_\bullet$, providing a generalisation of Descartes' equation \refeqn{Descartes}. This equation involves square roots, but upon multiplying by various conjugates (replacing various $m_\bullet$  with  $-m_\bullet$) one may obtain a polynomial relation among the $m_\bullet^2$; after substituting and clearing denominators one obtains a polynomial relation among the $\kappa_\bullet$.

In this way, from equation \refeqn{mquationOdd} with $n=3$ one can recover Descartes' theorem. Similarly, from  \refeqn{mquationEven} with $n=4$ we obtain the following equation relating the curvatures in a 4-flower:
\begin{align*}
16 \kappa_\infty^4 &- 8 \kappa_\infty^2 (\kappa_1 \kappa_2 + \kappa_2 \kappa_3 + \kappa_3 \kappa_4 + \kappa_4 \kappa_1 + 2 \kappa_1 \kappa_3 + 2 \kappa_2 \kappa_4) \\
& + (\kappa_1^2 + \kappa_3^2)(\kappa_2^2 + \kappa_4^2) - 16 \kappa_\infty ( \kappa_1 \kappa_2 \kappa_3 + \kappa_2 \kappa_3 \kappa_4 + \kappa_3 \kappa_4 \kappa_1 + \kappa_4 \kappa_1 \kappa_2) \\
& - 12 \kappa_1 \kappa_2 \kappa_3 \kappa_4
- 2 (\kappa_1 \kappa_2 + \kappa_3 \kappa_4)(\kappa_2 \kappa_3 + \kappa_4 \kappa_1) = 0.
\end{align*}
For larger $n$ the polynomials grow rapidly in size and degree.

Flowers are a building block of circle packing theory \cite{StephensonKenneth2005Itcp}. It is known from the general theory of circle packing that once the curvatures of the petals of an $n$-flower are known, the curvature of the central circle is determined. \refthm{GeneralisedDescartes} gives an explicit equation for that central curvature.

\medskip

{\noindent \textbf{Overview of proof.} } 
The proof proceeds in two steps. First, we convert the problem to one in hyperbolic geometry. Inverting each petal $C_j$ of  an $n$-flower in the  central circle $C_\infty$ yields a collection of circles $\mathring{C}_j$ internally tangent to $C_\infty$: an  ``inverted flower". 
Viewing $C_\infty$ as the boundary of the disc model of the hyperbolic plane, each  $\mathring{C}_j$ appears as a horosphere. It is useful for our purposes to convert to the upper half plane model, where $C_\infty$ becomes $\R \cup \{\infty\}$, and each $\mathring{C}_j$ becomes a horosphere $\overline{C}_j$ appearing as a circle tangent to $\R \cup \{\infty\}$. 
It is not difficult to relate the Euclidean curvatures $\kappa_j, \mathring{\kappa}_j, \overline{\kappa}_j$ of the circles $C_j, \mathring{C}_j, \overline{C}_j$ to each other.

Second, we use the spinor-horosphere correspondence recently proved by the first author in \cite{Mathews_Spinors_horospheres}. Building on work of Penrose--Rindler \cite{PenroseRindler} and Penner \cite{PennerPunctured}, this correspondence provides an explicit, smooth, bijective, $SL(2,\C)$-equivariant correspondence between nonzero 2-component spinors, and horospheres in hyperbolic 3-space $\hyp^3$, with a certain spinorial decoration. For us, spinors can be regarded simply as pairs of complex numbers $(\xi, \eta)$. When $\xi,\eta$ are \emph{real}, the correspondence essentially reduces to 2 dimensions, yielding a correspondence between nonzero $(\xi, \eta) \in \R^2$, and horocycles in the hyperbolic plane $\hyp^2$, which is particularly simple in the upper half plane model.
Under the correspondence, there is a simple way to calculate the distance between horospheres, and hence to know when they are tangent, using the bilinear form on spinors given by the $2 \times 2$ determinant. Moreover, the curvatures $\mathring{\kappa}_j$ and $\kappa_j$ are both straightforward quadratic functions of $\xi, \eta$: they are each given, up to a constant, by the Euclidean square length $\xi^2 + \eta^2$ of $(\xi, \eta)$.

Thus, the problem reduces to finding a relation among the Euclidean lengths of real 2-dimensional vectors $(\xi, \eta)$, given that they satisfy certain bilinear conditions equivalent to the tangency relations of a flower. The bilinear conditions are essentially that the vectors successively span parallelograms of area $1$.
The desired relation is found by introducing complex variables $z_j = \xi_j + i \eta_j$.

The expressions for the $m_j$ in \refeqn{mDefn} arise naturally in calculations with the spinors $(\xi_j, \eta_j)$, as do the various products arising in the equations \refeqn{mquationOdd} and \refeqn{mquationEven}. We thus include some further calculations in \refsec{spinor_calc} which help to motivate and explain these expressions, and explain how \refeqn{mquationOdd} and \refeqn{mquationEven} were found.

\medskip

{\noindent \textbf{Related work.} } 
Descartes' equation dates back to 1643 and we do not attempt to provide a summary of the history or developments over subsequent centuries; we simply mention some notable and recent works, and those most closely related to our approach.

Recent historical accounts of the the correspondence between Descartes and Elisabeth of the Palatinate, in which equation \refeqn{Descartes} first appeared, include \cite{Bos2010}, \cite[pp. 31--33]{Nye_Princess}, and \cite[pp. 37--38, 73--81]{Shapiro_correspondence}.
Equation \refeqn{Descartes} was stated by Yamaji Nushizumi in 1751 \cite{Michiwaki_Japanese_geometry}, and rediscovered several times, including by Steiner (1826) \cite{Steiner_1826}, Beecroft (1842) \cite{Beecroft_1842}, and Soddy (1936) \cite{Soddy_1936}; the latter restating \refeqn{Descartes} in poetry. 

Numerous generalisations of Descartes' theorem are known; we mention a few. For instance, in 1936 Gosset generalised the result to $n+2$ mutually tangent spheres in $n$ dimensions, appending a verse to Soddy's poem \cite{Soddy_Gosset_nature}. In 1962 Mauldon generalised these results to spherical and hyperbolic space \cite{Mauldon_1962}. In 2002 Lagarias--Mallows--Wilks  \cite{LagariasMallowsWilks} further extended these results, relating not only the curvatures but also the centres of the spheres involved. In 2007, Kocik \cite{kocik2007theorem} extended the result for $n+2$ Euclidean spheres in $n$ dimensions to the case where circles need not be tangent.

Apollonian circle packings, consisting of nested 3-flowers, are a research field in their own right; see e.g. Graham--Lagarias--Mallows--Wilks--Yan \cite{GLMWY_2003} or Aharonov--Stephenson \cite{StephensonAharonov} for general background. These packings have important number-theoretic properties and have seen recent breakthroughs, e.g. \cite{HKRS_local-global}. 
A special case closely related to our work, and mentioned by the first author in \cite{Mathews_Spinors_horospheres}, is that of Ford circles \cite{Ford_1938}: these arise from \emph{integer} spinors. 

So far as we know, however, none of these works provide a generalisation of Descartes' theorem to $n$-flowers for $n > 3$. There are certainly results involving flower-like configurations, such as Soddy's hexlet \cite{Soddy_Hexlet}, but they do not provide an equation relating curvatures.

Flowers being simple examples of circle packings, the general theory of packing applies to them: see generally Stephenson \cite{StephensonKenneth2005Itcp}.
In general, from a simplicial 2-complex $K$  triangulating an oriented surface, circle packing theory studies the existence and uniqueness of circle packings realising $K$, in the sense that vertices of $K$ correspond to circles,  edges of $K$ correspond to tangencies, and triangles of $K$ correspond to oriented tangent circle triples. Flowers arise in the simple case when $K$ is a disc built from $n$ triangles around a central vertex.
Perhaps most relevantly for our purposes, the ``boundary value theorem" of \cite[thm. 11.6]{StephensonKenneth2005Itcp} provides that when $K$ is topologically a disc, the curvatures of circles at boundary vertices, and branching structure, may be specified arbitrarily, and then a unique packing (in Eudlicean or hyperbolic geometry) exists. Our result gives the Euclidean curvature of the interior circle in the flower case.

To the best of our knowledge, perhaps the existing works most closely related to our approach are those of Kocik, who in several papers uses spinors to describe Descartes circle configurations and Apollonian packings
\cite{kocik2021integral, kocik2020spinors, kocik2020apollonian, kocik2019spinors, kocik2007theorem, Kocik_2006}. 
However in those works, spinors are complex numbers (defined up to sign) describing tangencies between pairs of circles. This is significantly different from our approach. 
We also mention that the linear algebra and bilinear forms of Lagarias--Mallows--Wilks \cite{LagariasMallowsWilks} and Aharonov--Stephenson \cite{StephensonAharonov} are of a somewhat similar flavour.

\medskip

{\noindent \textbf{Structure of this paper.} } 
\refsec{background} provides relevant background forming the basis of this paper.  \refsec{flower_to_horocycles} demonstrates how the $n$-flower can be linked to horocycles in hyperbolic geometry. 
\refsec{horospheres_to_spinors} then equips these horocycles with spinor coordinates in $\mathbb{C}^2$ by applying a correspondence between spinors and horospheres. \refsec{spinor_calc} performs calculations on spinors, which are not necessary for but motivate the main result. 
Finally, \refsec{proof} proves the generalised Descartes theorem.

\medskip

{\noindent \textbf{Acknowledgments.}}
The authors are supported by Australian Research Council grant DP210103136.

\section{Background}
\label{Sec:background}

\subsection{Spinors and Horospheres} 
\label{Sec:spinors_horospheres}

We state here results of the first author in  \cite{Mathews_Spinors_horospheres} required in the sequel. 

In \cite{PenroseRindler}, Penrose and Rindler consider 2-component spinors $(\xi,\eta) \in \C^2$ and provide them with interpretations in Minkowski space, and more generally in relativity theory. They associate to such a spinor a \emph{null flag}, consisting of a ray on the future light cone, together with a spinorial tangent plane to the light cone. There is a natural bilinear form $\{ \cdot, \cdot \}$ on such spinors, given by the standard complex symplectic form on $\C^2$: given two spinors $\alpha = (\xi,\eta)$ and $\alpha ' = (\xi',\eta')$, 
\[
\{\alpha,\alpha'\} = 
\det \begin{pmatrix} \xi & \xi' \\ \eta & \eta' \end{pmatrix}
= \xi \eta' - \eta \xi'.
\]
In \cite{PennerPunctured}, Penner associates to each point on the future light cone a horocycle in the hyperboloid model of hyperbolic space. 
In \cite{Mathews_Spinors_horospheres}, the first author combined these associations to prove the following.
\begin{theorem}
\label{Thm:MathewsThm1}
There is an explicit, smooth, bijective, $SL(2,\C)$-equivariant correspondence between nonzero spinors $(\xi,\eta) \in \mathbb{C}^2$, and horospheres $H$ in hyperbolic 3-space decorated with spin-directions.
\end{theorem}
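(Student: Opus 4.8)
The plan is to construct the correspondence as a composite of two classical constructions and then to verify the four asserted properties (explicitness, smoothness, bijectivity and equivariance) stage by stage. First I would recall the standard identification of Minkowski space $\R^{1,3}$ with the space of $2 \times 2$ Hermitian matrices, sending a vector to a Hermitian matrix $S$ so that the Minkowski quadratic form becomes $\det S = T^2 - X^2 - Y^2 - Z^2$; under this identification the future light cone corresponds to the positive-semidefinite rank-one Hermitian matrices, and $SL(2,\C)$ acts by $S \mapsto A S A^*$, realising the spin double cover $SL(2,\C) \To SO^+(1,3)$. To a nonzero spinor $\alpha = (\xi, \eta)$ I would associate the Hermitian matrix $\alpha \alpha^*$, which is rank one with positive trace $|\xi|^2 + |\eta|^2$, hence a future-pointing null vector --- the Penrose--Rindler \emph{flagpole}.

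Next, following Penner, to a future-pointing null vector $\ell$ I would attach a horosphere in the hyperboloid model $\hyp^3 = \{ x : \langle x, x\rangle = 1,\ x_0 > 0 \}$, for instance $H_\ell = \{ x \in \hyp^3 : \langle x, \ell\rangle = 1\}$. The ideal centre of $H_\ell$ is the asymptotic direction of $\ell$, and rescaling $\ell$ slides $H_\ell$ through the family of horospheres concentric at that ideal point, so the magnitude of $\alpha\alpha^*$ pins down one horosphere in the family. Composing, I obtain from each spinor a genuine horosphere. The remaining ingredient is the \emph{spin-decoration}: the spinor carries more information than its flagpole, namely the Penrose--Rindler flag plane, a real tangent $2$-plane to the light cone through the flagpole. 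Transporting the flag-plane direction into a tangent direction along $H_\ell$ produces the decoration. The key structural fact to isolate is that rotating the phase $\alpha \mapsto e^{i\theta} \alpha$ fixes the flagpole but rotates the flag plane by $2\theta$; this factor of $2$ is exactly the source of the double cover.

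The verification then splits as follows. Smoothness is immediate, since every map above is given by polynomial expressions in $\xi, \eta, \bar\xi, \bar\eta$ together with smooth functions on the hyperboloid. Equivariance I would prove one stage at a time: $\alpha \mapsto \alpha\alpha^*$ intertwines $\alpha \mapsto A\alpha$ with $S \mapsto ASA^*$ by construction, Penner's assignment is manifestly $SO^+(1,3)$-natural, and the flag plane is carried to the flag plane, so the composite intertwines the $SL(2,\C)$-action on spinors with its action by isometries on decorated horospheres. For bijectivity I would write down the inverse explicitly: a decorated horosphere determines its ideal centre and hence the direction of $\alpha$ up to scale and phase, the choice of horosphere within the concentric family recovers $|\xi|^2 + |\eta|^2$, and the spin-decoration fixes the phase.

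The main obstacle is precisely this last point: the naive assignment $\alpha \mapsto \alpha\alpha^*$ is two-to-one, since $\pm\alpha$ share a flagpole \emph{and} a flag plane, so to obtain a bijection the decoration must record exactly the $\Z/2$ distinguishing $\alpha$ from $-\alpha$. This is why the decoration must be a \emph{spin}-direction rather than an ordinary tangent direction: because the phase acts on the flag plane through the double angle $2\theta$, an honest tangent direction returns to itself under $\alpha \mapsto -\alpha$, whereas a spin-direction --- a lift through the frame bundle of the horosphere remembering the homotopy class of the rotation --- does not. The delicate work, which I expect to occupy most of the proof, is to define this spin structure intrinsically on the horosphere, to check that the flag-plane transport lands in it consistently, and to verify that the resulting $\Z/2$ matches the sign ambiguity, so that the composite is bijective on the nose.
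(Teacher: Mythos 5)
Your proposal is correct and takes essentially the same approach as the source: this paper does not prove the theorem itself but imports it from \cite{Mathews_Spinors_horospheres}, and the proof strategy it describes there is exactly yours --- compose the Penrose--Rindler assignment $\alpha \mapsto \alpha\alpha^*$ of null flags to spinors with Penner's association of horospheres to points on the future light cone, using the flag plane to produce the decoration and the spin lift to resolve the two-to-one ambiguity between $\pm\alpha$. Your identification of the double-angle behaviour of the flag plane under phase rotation as the source of the double cover, and of the $\Z/2$ bookkeeping as the delicate point, matches the role the spin decorations play in the cited construction.
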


We briefly explain the notions in this theorem; see \cite[sec. 4]{Mathews_Spinors_horospheres} for further details.

A \emph{decoration} on a horosphere is a parallel oriented line field on it, i.e., roughly speaking, a direction along it. Such  fields or directions are well defined  because a horosphere is isometric to the Euclidean plane.
A decoration on a horosphere then lifts to two different \emph{spin decorations} as follows. A parallel oriented line field on a horosphere $H$ can be equivalently described by a parallel unit tangent vector field $v$ on $H$. From $v$ we can form \emph{inward} and \emph{outward} right-handed orthonormal frame fields on $H$ given by $f^{in} = (N^{in}, v, N^{in} \times v)$ and $f^{out} = (N^{out}, v, N^{out} \times v)$, where $N^{in}, N^{out}$ are respectively inward and outward normal vector fields on $H$. Each of  these two frame fields has two continuous lifts to the spin double cover of the frame bundle, which we call \emph{inward} and \emph{outward spin decorations}. We define an inward spin decoration $W^{in}$ and an outward spin decoration $W^{out}$ to be \emph{associated} if they are related by a specific rotation around $v$. A \emph{spin decoration} on $H$ is then defined as a pair $(W^{in}, W^{out})$ of associated inward and outward spin decorations. 

The action of $SL(2,\C)$ on $\C^2$ is by matrix-vector multiplication. The action of $SL(2,\C)$ on horospheres is a lift of the standard action of $PSL(2,\C)$ on hyperbolic 3-space by isometries (indeed, $PSL(2,\C) \cong \Isom^+ (\hyp^3)$, the orientation-preserving isometry group of hyperbolic 3-space). Each isometry in $PSL(2,\C)$ has two lifts to $SL(2,\C)$, which are negatives of each other. Just as with any universal cover, a lift of an isometry $\phi$ from $PSL(2,\C)$ to $SL(2,\C)$ may be specified by a path in $PSL(2,\C)$ from the identity to $\phi$; this path of isometries, applied to a spin-decorated horosphere, yields a path of spin-decorated horospheres; this determines the action of $SL(2,\C)$ on spin-decorated horospheres.

In the upper half space model, given in standard fashion as
\[
\mathbb{U}^3 = \{ (x,y,z) \; \mid \; z > 0 \} = \{ (x+yi, z) \; \mid \; z > 0 \} = \C \times \R_+
\]
with Riemannian metric $ds^2 = (dx^2 + dy^2 + dz^2)/z^2$, the explicit correspondence of \refthm{MathewsThm1} is particularly simple \cite[prop. 3.9]{Mathews_Spinors_horospheres}.  The horosphere $H$ corresponding to the spinor $(\xi,\eta)$ has centre $\xi/\eta$, the centre of a horosphere being its point at infinity. If $\xi/\eta \in \mathbb{C}$, then $H$ appears in the model as a Euclidean sphere of diameter $1/|\eta|^2$. A decoration on a horosphere can be given by a tangent direction to $H$ at a point; at the point of $H$ with maximum $z$-coordinate, tangent directions are parallel to $\C$ and are conveniently specified by complex numbers (the ``north pole specification" of a direction). The decoration on the horosphere $H$ corresponding to $(\xi,\eta)$ is north-pole specified by $i/\eta^2$. (The spinor $(-\xi, -\eta)$ yields the same decoration on the same horosphere, but a different spin decoration.)
If $\xi/\eta=\infty$ then $H$ appears in the model as a horizontal plane at Euclidean height $|\xi|^2$. Since such a horosphere appears parallel to $\C$, a direction can again be specified by an element of $\C$. The decoration on this $H$, corresponding to $(\xi,\eta)$, is then specified by $i\xi^2$.

We will be considering reducing this situation to 2 dimensions, as discussed in \cite[sec. 5]{Mathews_Spinors_horospheres}. The points $(x,y,z)$ of the upper half space model $\mathbb{U}^3$ with $y=0$ form the upper half plane model of the hyperbolic plane $\mathbb{U}^2$. The boundary at infinity of $\mathbb{U}^2$ is then given by $\partial \mathbb{U}^2 = \R \cup \{\infty\} \subset \C \cup \{\infty\} = \partial \mathbb{U}^3$. Horospheres in $\mathbb{U}^3$ centred at points of $\R \cup \{\infty\}$ correspond bijectively to horocycles in $\mathbb{U}^2$, with the bijection given by intersecting with $\mathbb{U}^2$. In this way we may regard horocycles $H$ of $\mathbb{U}^2$ as arising from horospheres $\widetilde{H}$ of $\mathbb{U}^3$ centred at $\R \cup \{\infty\}$.

Given a horocycle $H$ in $\mathbb{U}^2 \subset \mathbb{U}^3$, a \emph{planar spin decoration} on $H$  is a spin decoration on $\widetilde{H}$ whose decoration is (north-pole) specified by the direction $i$. A spinor $(\xi, \eta)$ describes a planar spin decoration on a horocycle if and only if $\xi$ and $\eta$ are both \emph{real} \cite[lem. 5.6]{Mathews_Spinors_horospheres}. Each horocycle $H$ of $\mathbb{U}^2$ has precisely two planar spin decorations. These two spin decorations are related to each other by a $2\pi$ rotation. The corresponding spinors are negatives of each other. Thus, roughly speaking, reducing to 2 dimensions amounts to considering \emph{real} spinors.

\subsection{Lambda Lengths}

Penner \cite{PennerPunctured} introduced the notion of $\lambda$-length between horocycles in the hyperbolic plane. Take the geodesic $\gamma$ connecting the centres of two horocycles and consider the segment external to both; if they overlap, then consider the segment within the overlap and call the distance negative. If the hyperbolic length of this segment is $\rho$, the $\lambda$-length is defined as $e^{\rho/2}$.

In \cite{Mathews_Spinors_horospheres} the first author generalised $\lambda$-lengths to 3 dimensions. Given two spin-decorated horospheres $H_1, H_2$ with distinct centres $z_1, z_2$, we may measure a \emph{complex} distance from $H_1$ to $H_2$ as follows. Let $\gamma$ be the oriented geodesic from $z_1$ to $z_2$. Let $\rho$ be the signed distance along $\gamma$ (using the orientation of $\gamma$) from $p_1 = \gamma \cap H_1$ to $p_2 = \gamma \cap H_2$. The spin decoration on $H_1$ contains an inward spin decoration $W^{in}_1$, and the spin decoration $H_2$ contains an outward spin decoration $W^{out}_2$. These spin decorations $W^{in}_1, W^{out}_2$ project to frames $f^{in}_1, f^{out}_2$ at $p_1, p_2$ respectively, whose first vectors are positively tangent to $\gamma$. The two frames are then related by a signed translation of length $\rho$ along $\gamma$, followed by a rotation of some angle $\theta$ about $\gamma$. Lifting to spin decorations, this $\theta$ is well defined modulo $4\pi$. The complex distance from $H_1$ to $H_2$ is then $\rho + i \theta$, and the \emph{complex $\lambda$-length} from $H_1$ to $H_2$ is
\[
\lambda_{12} 
= \exp\left(\frac{\rho + i \theta}{2}\right)
\]
It turns out that the complex $\lambda$-length is \emph{antisymmetric}: $\lambda_{12} = - \lambda_{21}$.
See \cite[sec. 4]{Mathews_Spinors_horospheres} for further details.

If we have two horocycles $H_1, H_2$ in $\mathbb{U}^2$ with planar spin decorations, then the frames $f^{in}_1, f^{out}_2$ have their first vectors pointing along $\gamma$ and their second vectors pointing the direction of the decoration, i.e. normal to $\mathbb{U}^2$ in the direction specified by $i$, or equivalently using the standard coordinates $(x,y,z)$, in the positive $y$-direction. It follows that $\theta = 0$ modulo $2\pi$. Then we observe that the complex $\lambda$-length from $H_1$ to $H_2$ is real, taking a positive value if $\theta = 0$ mod $4\pi$ and a negative value if $\theta = 2\pi$ mod $4\pi$. Penner's 2-dimensional $\lambda$-length is the positive value.

In \cite{Mathews_Spinors_horospheres}, the first author showed that complex $\lambda$-lengths are given by spinors using Penrose and Rindler's bilinear form.
\begin{theorem}
\label{Thm:MathewsThm2}
Consider two spinors $\kappa_1,\ \kappa_2$, corresponding to spin-decorated horospheres $H_1,H_2$. Then the complex $\lambda$-length $\lambda_{12}$ from $H_1$ to $H_2$ is given by
\[
\lambda_{12} = \{\kappa_1,\kappa_2\} .
\]
\end{theorem}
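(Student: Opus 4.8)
The plan is to leverage the $SL(2,\C)$-equivariance of the correspondence in \refthm{MathewsThm1} to reduce the identity to a single explicit model computation. I first observe that both sides of $\lambda_{12} = \{\kappa_1,\kappa_2\}$ are invariant under the diagonal action of $SL(2,\C)$ on pairs of spinors. The right-hand side is invariant because $\{A\kappa_1, A\kappa_2\} = \det(A)\,\{\kappa_1,\kappa_2\} = \{\kappa_1,\kappa_2\}$ for $A \in SL(2,\C)$. For the left-hand side, by \refthm{MathewsThm1} the action of $A$ on the spinors corresponds to applying a lift of an isometry $\phi \in PSL(2,\C)$ to the spin-decorated horospheres; this isometry sends the oriented geodesic $\gamma$ joining the centres of $H_1, H_2$ to the geodesic joining the image centres, preserves the signed length $\rho$, and---because the $SL(2,\C)$-lift is prescribed by a path and so transports the spin framing coherently---preserves the rotation angle $\theta$ modulo $4\pi$. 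Hence $\rho + i\theta$, and with it $\lambda_{12}$, is unchanged.

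Next I note that $\{\cdot,\cdot\}$ is a complete invariant of the orbit of a pair of linearly independent spinors. Assembling a pair into the columns of a matrix $M = (\kappa_1 \mid \kappa_2)$ gives $\det M = \{\kappa_1,\kappa_2\}$, and two such pairs $M, M'$ lie in a common $SL(2,\C)$-orbit exactly when $A = M'M^{-1}$ has $\det A = 1$, i.e. when $\det M' = \det M$. Since distinct horosphere centres force $\{\kappa_1,\kappa_2\} \neq 0$, every admissible pair is $SL(2,\C)$-equivalent to the standard pair $\kappa_1 = (1,0)$, $\kappa_2 = (0,c)$ with $c = \{\kappa_1,\kappa_2\} \in \C^\times$. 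By the invariance established above, it suffices to verify $\lambda_{12} = c$ for this one-parameter family.

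For the standard pair the explicit formulas of \refthm{MathewsThm1} make the geometry transparent in the upper half space model. The spinor $(1,0)$ yields the horizontal plane $H_1$ at height $1$, centred at $\infty$, decorated by $i$; the spinor $(0,c)$ yields the horosphere $H_2$ centred at $0$ of diameter $1/|c|^2$, decorated by $i/c^2$. The geodesic $\gamma$ from $\infty$ to $0$ is the vertical axis, meeting $H_1$ at height $1$ and $H_2$ at height $1/|c|^2$, so the signed distance is $\rho = 2\log|c|$ and $e^{\rho/2} = |c|$. It then remains to compute the rotation angle $\theta$ carrying the inward frame $f_1^{in}$ to the outward frame $f_2^{out}$ along $\gamma$: the two decoration directions $i$ and $i/c^2$ differ by $-2\arg c$, and after incorporating the inward/outward normal conventions and the spin lift this gives $\theta \equiv 2\arg c \pmod{4\pi}$. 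Therefore $\lambda_{12} = e^{\rho/2} e^{i\theta/2} = |c|\, e^{i\arg c} = c$, as claimed; the antisymmetry $\lambda_{12} = -\lambda_{21}$ is consistent with $\{\kappa_1,\kappa_2\} = -\{\kappa_2,\kappa_1\}$.

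The hard part is the determination of $\theta$ modulo $4\pi$ in the model computation. This angle is not merely the difference of the two decoration directions: it depends on the precise convention relating the inward spin decoration $W_1^{in}$ of $H_1$ to the outward spin decoration $W_2^{out}$ of $H_2$, on the orientation of $\gamma$, and on the way the spin double cover promotes a $2\pi$-ambiguity into a well-defined quantity modulo $4\pi$. Fixing the sign and the factor of two correctly---so that $\arg c$ appears rather than $-\arg c$, and so that the two planar spin decorations differing by a $2\pi$ rotation correspond to $\kappa$ and $-\kappa$---is where essentially all the care is needed; the remaining steps are direct substitutions into the correspondence of \refthm{MathewsThm1}.
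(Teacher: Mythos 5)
A preliminary remark on the comparison: this paper never proves \refthm{MathewsThm2} at all --- it is quoted as background from \cite{Mathews_Spinors_horospheres} --- so there is no internal proof to measure you against; your proposal must stand on its own. Its overall strategy is sound and is essentially the standard one for this result: both sides are invariant under the diagonal $SL(2,\C)$ action (the form because $\det A = 1$, the $\lambda$-length because the $SL(2,\C)$ action is by spin-lifted isometries), the form is a complete invariant of pairs of independent spinors, so one reduces to the pair $(1,0),(0,c)$ with $c = \{\kappa_1,\kappa_2\}$ and computes in the upper half space model. The modulus part of that computation is complete and correct: $\rho = 2\log|c|$, hence $e^{\rho/2} = |c|$.

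The genuine gap is the phase, which you assert rather than prove, and which is precisely the content of the theorem. Two things are missing. First, the passage from ``the decoration directions $i$ and $i/c^2$ differ by $-2\arg c$'' to ``$\theta \equiv +2\arg c$'' is attributed to ``incorporating the inward/outward normal conventions'' but never derived; the sign comes from the fact that $\theta$ is measured about $\gamma$ oriented downward (from $\infty$ to $0$), so a rotation that is counterclockwise viewed from above is a negative rotation about $\gamma$ --- as written, your argument supports $\theta \equiv -2\arg c$ equally well, which would give $\lambda_{12} = \overline{c}$, not $c$. Second, and more fundamentally, comparing the frames $f_1^{in}, f_2^{out}$ can only ever determine $\theta$ modulo $2\pi$; the refinement to modulo $4\pi$ is carried by the spin lifts $W_1^{in}, W_2^{out}$, and that refinement is exactly the overall sign of $\lambda_{12}$ --- the spinorial point of the theorem, since replacing $\kappa_j$ by $-\kappa_j$ changes the spin decoration and flips the sign. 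Nothing in your computation tracks the spin lifts, so at best you have shown $\lambda_{12} = \pm\{\kappa_1,\kappa_2\}$. To close the gap, either track the lifts through the ``associated inward/outward frame'' convention (this is what the present paper does in its proof of \refprop{kappa_equation}, where $\theta = 0$ rather than $2\pi$ is pinned down by the rotation-by-$-\pi$ convention relating $W^{in}$ to $W^{out}$), or verify the sign by an explicit spin computation at the single value $c = 1$ and propagate it to all $c \in \C^\times$ by continuity and connectedness: both sides are continuous and nonvanishing, they agree up to sign pointwise, and $\C^\times$ is connected, so agreement at one point forces agreement everywhere.
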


Indeed, in the case where $\kappa_1$, $\kappa_2$ are real spinors, describing horocycles with planar spin decorations, we observe that $\{\kappa_1,\kappa_2\}$ is real, as is $\lambda_{12}$. The complex $\lambda$-length $\lambda_{12}$ is positive or negative. Replacing one of the $\kappa_j$ with $-\kappa_j$ has the effect of changing the spin decoration on $H_j$, adding $2\pi$ to $\theta$ (mod $4\pi$) and changing the sign of $\lambda_{12}$ (by introducing a factor of $e^{i\pi} = -1$). Similarly, replacing $(\kappa_1, \kappa_2)$ with $(\kappa_2, \kappa_1)$ changes the sign of the complex $\lambda$-length.

Complex $\lambda$-lengths have several nice properties. For instance, given four horocycles (whose $\lambda$-lengths measure line segments forming an inscribed quadrilateral inside the disk), their $\lambda$-lengths satisfy Ptolemy's relation
\[\lambda_{12}\lambda_{34}+\lambda_{23}\lambda_{14}=\lambda_{13}\lambda_{24}.\]

Importantly for our purposes, the $\lambda$-length between two horocycles (with planar spin decorations) is $\pm 1$ if and only if the horocycles are tangent. This gives a simple condition for checking a purported $n$-flower.

\section{From Flowers to Horocycles}
\label{Sec:flower_to_horocycles}

Consider an $n$-flower ($n \geq 3$) in the Euclidean plane, as in \refdef{flower}. Let each circle $C_\bullet$ have radius $r_\bullet$ and curvature $\kappa_\bullet = 1/r_\bullet$.

Dilating the entire configuration by a factor of $\kappa_\infty$ sends the curvature $\kappa_\infty \mapsto 1$ and, for each petal circle, $\kappa_j \mapsto \kappa_j/\kappa_\infty$. We may thus assume the central circle has unit radius, so that $\kappa_\infty = 1$, and for each petal circle $C_j$ write $\kappa_j$ for the resulting curvature.

Invert the configuration in the unit circle $C_\infty$. Each $C_j$ is mapped to a circle $\mathring{C}_j$ internally tangent to $C_\infty$. For each $j$ mod $n$ then $\mathring{C}_j$ is externally tangent to $\mathring{C}_{j-1}$ and $\mathring{C}_{j+1}$. See \reffig{PoincareFlower}.  We denote by $\mathring{r}_j$ and $\mathring{\kappa}_j$ the (Euclidean) radius and curvature of $\mathring{C}_j$ respectively.
(The superscript ring notation is intended to indicate ``inside the unit disc".)

\begin{figure}[!ht]
\centering
\begin{tikzpicture}[scale=0.7]
\draw[very thick](-0.2,0) circle (1.35);
\draw[very thick](2.20,-0.31) circle (1.07);
\draw[very thick](1.57,2.02) circle (1.32);
\draw[very thick, red] (0.95,0.77) circle (2.76);
\end{tikzpicture}
\caption{\textit{An inverted $3$-flower.}}
\label{Fig:PoincareFlower}
\end{figure}
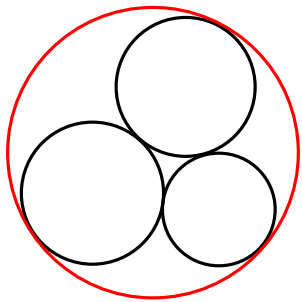

\begin{lemma}
\label{Lem:kappa_translation}
For each $j$,
\[
\mathring{\kappa}_j = \kappa_j +2.
\]
\end{lemma}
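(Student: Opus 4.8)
The plan is to compute the radius of $\mathring{C}_j$ directly from the elementary geometry of inversion in $C_\infty$, and then take reciprocals to pass to curvatures. The key is to avoid tracking the centre of $\mathring{C}_j$ (which inversion does \emph{not} send to the centre of $C_j$) and instead work with the two extreme points of the circle along the central ray.

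First I would record the position of $C_j$. We have normalised so that $C_\infty$ is the unit circle centred at the origin with $\kappa_\infty = 1$, and $C_j$ is externally tangent to $C_\infty$ with radius $r_j = 1/\kappa_j$; hence the centre of $C_j$ lies at distance $1 + r_j$ from the origin. The ray from the origin through the centre of $C_j$ therefore meets $C_j$ at its nearest point, at distance $(1+r_j) - r_j = 1$ (which is exactly the tangency point on $C_\infty$, as it must be), and at its farthest point, at distance $(1+r_j) + r_j = 1 + 2r_j$.

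Next I apply the inversion. It fixes this ray setwise and sends a point at distance $d$ from the origin to one at distance $1/d$, so the nearest and farthest points of $C_j$ map to points at distances $1$ and $1/(1+2r_j)$ along the ray. Since inversion sends circles not through the origin to circles, and these two image points are the intersections of $\mathring{C}_j$ with a line through the origin, they are diametrically opposite on $\mathring{C}_j$. Thus the diameter of $\mathring{C}_j$ is
\[
1 - \frac{1}{1+2r_j} = \frac{2r_j}{1+2r_j},
\]
giving $\mathring{r}_j = \dfrac{r_j}{1+2r_j}$. Taking reciprocals yields
\[
\mathring{\kappa}_j = \frac{1}{\mathring{r}_j} = \frac{1+2r_j}{r_j} = \frac{1}{r_j} + 2 = \kappa_j + 2,
\]
as claimed.

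There is no real obstacle here, as the computation is routine; the one point I would be careful to state explicitly is the observation that the two extreme points of $C_j$ along the central ray map to \emph{diametrically opposite} points of $\mathring{C}_j$. This is what lets us read off the diameter as a simple difference of distances, sidestepping the fact that inversion distorts the location of the centre.
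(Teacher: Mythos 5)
Your proof is correct and follows essentially the same approach as the paper: both invert the extreme points of $C_j$ along the ray through the centres and read off the relation between $r_j$ and $\mathring{r}_j$. The only cosmetic difference is that the paper uses the internal tangency of $\mathring{C}_j$ to write the image of the far point as $1-2\mathring{r}_j$ and gets the single equation $(1+2r_j)(1-2\mathring{r}_j)=1$, whereas you compute the diameter of $\mathring{C}_j$ directly from the two image points; the algebra is equivalent.
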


\begin{proof}
Let $O$ denote the centre of $C_\infty$. The furthest point $A$ from $O$ on $C_j$ has distance $1+2r_j$ from $O$. The closest point $A'$ to $O$ on $\mathring{C}_j$ has distance $1-2\mathring{r}_j$ from $O$. See \reffig{InversionCurvature}.

\begin{figure}[!h]
\begin{center}
\begin{tikzpicture}[scale=0.7]
\draw[very thick](1.82,0) circle (0.9);
\draw[very thick](4.76,0) circle (2);
\draw[very thick, red] (0,0) circle (2.76);
\filldraw[black] (0,0) circle (0.05) node [above left] {$O$};
\filldraw[black] (1.82,0) circle (0.05); 
\filldraw[black] (4.76,0) circle (0.05); 
\filldraw[black] (6.76,0) circle (0.06) node [right] {$A$};
\filldraw[black] (0.92,0) circle (0.06) node [below left] {$A'$};
\draw[dashed] (1.82,0) -- (2.76,0) node [midway, above] {$\mathring{r}$};
\draw[dashed] (4.76,0) -- (6.76,0) node [midway, above] {$r$};;
\end{tikzpicture}
\caption{\textit{Inversion of a petal circle.}}
\label{Fig:InversionCurvature}
\end{center}
\end{figure}
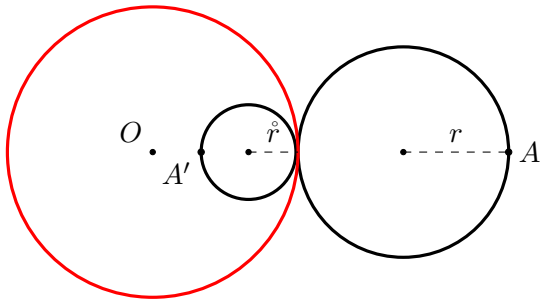

Inversion in $C_\infty$ sends $A \mapsto A'$, so 
\[
( 1+ 2r_j ) (1 - 2 \mathring{r}_j )=1,
\]
which upon substituting $r_j = 1/\kappa_j$ and $\mathring{r}_j = 1/\mathring{\kappa}_j$ yields the desired equation.
\end{proof}

We now consider the unit circle $C_{\infty}$ as the boundary of the disc model $\mathbb{D}^2$ of the hyperbolic plane. Each $\mathring{C}_j$ then appears as a horocycle. 

Next, we convert from the disc model $\mathbb{D}^2$ to the upper half plane model $\mathbb{U}^2$ of the hyperbolic plane. Regarding $\mathbb{D}^2$ and $\mathbb{U}^2$ as subsets of the complex plane in the standard way, the two models are related by the Cayley transform $\mathfrak{S} \colon \mathbb{U}^2 \To \mathbb{D}^2$ and its inverse:
\begin{equation}
\label{Eqn:Cayley_transform}
\mathfrak{S}(z) = \frac{z-i}{z+i}, \quad
\mathfrak{S}^{-1}(z) = \frac{(z+1)i}{1-z}.
\end{equation}
Observe that $\mathfrak{S}^{-1}$ sends $\partial \D^2$ to $\partial \U^2 = \R \cup \{\infty\}$, and sends horocycles in $\D^2$ to horocycles in $\U^2$.
Moreover, the point $1$ on $\partial \mathbb{D}^2$ corresponds to the point $\infty$ on $\partial \mathbb{U}^2 = \R \cup \{\infty\}$. And as the unit circle $\partial \mathbb{D}^2$ is traversed anticlockwise, its image $\R \cup \{\infty\}$ under $\mathfrak{S}^{-1}$ is traversed in the increasing direction.

Before applying $\mathfrak{S}^{-1}$, if necessary we reflect the configuration in an arbitrary hyperbolic line through the origin, so that the centres of $\mathring{C}_0, \mathring{C}_1, \ldots, \mathring{C}_{n-1}$ are in anticlockwise order around $\partial \mathbb{D}^2$. This is a reflection in both hyperbolic and Euclidean geometry, so preserves all $\mathring{r}_j$ and $\mathring{\kappa}_j$. Then, if necessary, we rotate $\mathbb{D}^2$ about $0$ so that, proceeding anticlockwise from $1$ around $\partial \mathbb{D}^2$, we encounter the centre of $\mathring{C}_0$ first, then $\mathring{C}_1, \ldots, \mathring{C}_{n-1}$, in  order. Again, this is a rotation in both hyperbolic and Euclidean geometry, so preserves all $\mathring{r}_j$ and $\mathring{\kappa}_j$.

Now apply $\mathfrak{S}^{-1}$ to the entire configuration. Since no $\mathring{C}_j$ is tangent to $\partial \mathbb{D}^2$ at $1$, then no $\overline{C}_j$ is tangent to $\partial \mathbb{U}^2 = \R \cup \{\infty\}$ at $\infty$. Thus each horocycle $\overline{C}_j$ has centre in $\R \subset \partial \U^2$, and appears as a circle in $\mathbb{U}^2$, with a finite radius $\overline{r}_j$ and curvature $\overline{\kappa}_j$. Moreover, since the centres of $\mathring{C}_j$ are in order around $\partial \D^2$ proceeding anticlockwise from $1$, the centres of the $\overline{C}_j$ are in increasing order along $\R \subset \partial \U^2$. 

We thus obtain a ``flat $n$-flower" where the original central circle has become $\R \cup \{\infty\}$, and each petal circle has become a horocycle in $\U^2$, with each $\overline{C}_j$ externally tangent to $\overline{C}_{j-1}$ and $\overline{C}_{j+1}$ (with indices taken mod $n$), and the horocycles 
increasing in index from left to right.
See \reffig{UpperHalfFlower}. 
(The overline notation is intended to indicate ``in a flat flower along the real line".)

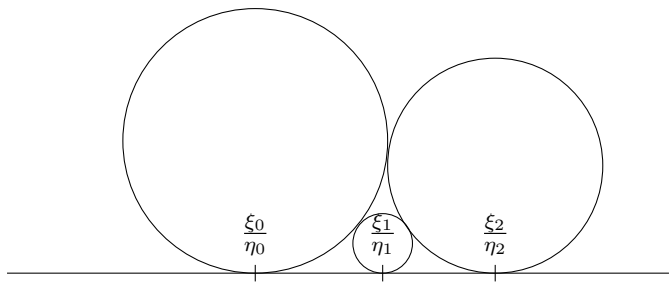
\begin{figure}[!ht]
\begin{center}
\begin{tikzpicture}[scale=1.1]
\draw[-] (-4,0) -- (4,0);
\draw (-1,1.6) circle (1.6);
\draw (1.9,1.3) circle (1.3);
\draw (0.54,0.36) circle (0.36);
\draw (-1,-0.1) -- (-1,0.1) node [above] {$\frac{\xi_0}{\eta_0}$};
\draw (0.54,-0.1) -- (0.54,0.1) node [above] {$\frac{\xi_1}{\eta_1}$};
\draw (1.9,-0.1) -- (1.9,0.1) node [above] {$\frac{\xi_2}{\eta_2}$};
\end{tikzpicture}
\caption{\textit{A flat $3$-flower}}
\label{Fig:UpperHalfFlower}
\end{center}
\end{figure}

Finally, observe that the circles $\overline{C}_j$ can be translated horizontally without affecting the tangencies of circles or their or curvatures. We apply such a translation so that $\overline{C}_0$ is tangent to the real line at $0$. Since the horocycles $\overline{C}_j$ are tangent to the real line in increasing order, for $j \geq 1$ each $\overline{C}_j$ is tangent to the real line at a positive number.

\section{From Horospheres to Spinors}
\label{Sec:horospheres_to_spinors}

We now introduce spinors for the horocycles $\overline{C}_j$. 

As discussed in \refsec{spinors_horospheres}, each horocycle  in the hyperbolic plane has two planar spin decorations,  corresponding to two real spinors which are negatives of each other.

Thus each horocycle $\overline{C}_j$ corresponds to a real spinor $ \alpha_j = (\xi_j, \eta_j)$, well defined up to sign. The centre of $\overline{C}_j$ is at $\xi_j/\eta_j$, which lies in $\R$, so each $\eta_j$ is nonzero. The Euclidean radius $\overline{r}_j$ and curvature $\overline{\kappa}_j$ of $\overline{C}_j$ are given by
\begin{equation}
\label{Eqn:eta_kappa}
\overline{r}_j = \frac{1}{2 \eta_j^2}
\quad \text{and} \quad
\overline{\kappa}_j = 2 \eta_j^2.
\end{equation}
(In general the Euclidean diameter of the horosphere corresponding to $(\xi, \eta)$ is $\frac{1}{|\eta|^2}$. Here $\eta$ is real.)

We now choose each $\alpha_j$ so that $\eta_j$ is positive.

Recall $\overline{C}_0$ is constructed to be tangent to the real line at $0$, and the centres of the $\overline{C}_j$ are in increasing order along the real line. Thus we have
\[
0 = \frac{\xi_0}{\eta_0} < \frac{\xi_1}{\eta_1} < \cdots < \frac{\xi_{n-1}}{\eta_{n-1}}.
\]
Thus $\xi_0 = 0$ and, since all $\eta_j$ are chosen positive, for $j \geq 1$ we have $\xi_j > 0$.

Calculating bilinear forms, for all $0 \leq j<k \leq n-1$ we have
\[
\{ \alpha_j, \alpha_k \}
= \xi_j \eta_k - \xi_k \eta_j = \eta_j \eta_k \left( \frac{\xi_j}{\eta_j} - \frac{\xi_k}{\eta_k} \right) < 0.
\]
(This is the opposite of the \emph{totally positive} notion of \cite[sec. 6]{Mathews_Spinors_horospheres}; the $\alpha_j$ here are ``totally negative".)

Now for each $j$ mod $n$ we know that $\overline{C}_j$ and $\overline{C}_{j+1}$ are tangent. Thus the complex lambda length between planar spin decorations of $\overline{C}_j$ and $\overline{C}_{j+1}$ is $\pm 1$. Combining these observations we have the following.
\begin{lem}
\label{Lem:spinor_signs}
Choosing the spinors $\alpha_j = (\xi_j, \eta_j)$ for planar spin decorations on each $\overline{C}_j$ so that all $\eta_j>0$, then for all $0 \leq j \leq n-2$ we have
\[
\{\alpha_j, \alpha_{j+1} \} = \xi_j \eta_{j+1} - \xi_{j+1} \eta_j = -1,
\]
and $\{\alpha_0, \alpha_{n-1} \} = -1$.
\qed
\end{lem}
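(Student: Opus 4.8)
The plan is to combine two facts that are already in hand. On one hand, consecutive horocycles in the flat flower are tangent, so by the tangency criterion at the end of \refsec{background} their complex $\lambda$-length equals $\pm 1$, and by \refthm{MathewsThm2} this $\lambda$-length is exactly the bilinear form of the corresponding spinors. On the other hand, the \emph{total negativity} computation performed just above this lemma shows $\{\alpha_j,\alpha_k\}<0$ whenever $j<k$, since all $\eta_j>0$ and the centres $\xi_j/\eta_j$ are strictly increasing. A real number that is simultaneously $\pm 1$ and strictly negative must equal $-1$; so the lemma follows as soon as each tangent pair is matched to an ordered pair $j<k$.

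First I would record the tangency input precisely. For each $j$ mod $n$ the horocycles $\overline{C}_j$ and $\overline{C}_{j+1}$ are tangent, so their complex $\lambda$-length is $\pm 1$, and by \refthm{MathewsThm2} this equals $\{\alpha_j,\alpha_{j+1}\}$. Hence $\{\alpha_j,\alpha_{j+1}\}=\pm 1$ for every $j$ mod $n$.

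Next I would sort these tangent pairs into the $j<k$ convention used in the total-negativity inequality. For $0\le j\le n-2$ the pair $(j,j+1)$ already has its indices in increasing order, so $\{\alpha_j,\alpha_{j+1}\}$ is both $\pm1$ (tangency) and strictly negative (total negativity); therefore $\{\alpha_j,\alpha_{j+1}\}=-1$. The one remaining tangent pair is that of the index-$(n-1)$ and index-$0$ petals, which are consecutive mod $n$; written with the smaller index first this is the pair $(0,n-1)$, so $\{\alpha_0,\alpha_{n-1}\}=\pm 1$, and since $0<n-1$ total negativity again forces $\{\alpha_0,\alpha_{n-1}\}=-1$.

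There is essentially no analytic obstacle here: the substantive content, namely identifying tangency with $\{\alpha_j,\alpha_k\}=\pm1$ via \refthm{MathewsThm2} and pinning down the sign via the ordering of centres, is already established, and the lemma is a short deduction from the two. The only point that genuinely requires care is the wrap-around pair, where antisymmetry $\{\alpha_{n-1},\alpha_0\}=-\{\alpha_0,\alpha_{n-1}\}$ means the identity must be stated with a fixed index order; choosing the order $0<n-1$ so as to match the total-negativity inequality is what makes the sign come out as claimed.
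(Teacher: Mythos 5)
Your proposal is correct and follows exactly the paper's own (implicit) argument: the paper states this lemma with no separate proof precisely because it is the combination of the two observations immediately preceding it --- tangency of consecutive petals giving $\lambda$-length $\pm 1$ via \refthm{MathewsThm2}, and the ``totally negative'' computation $\{\alpha_j,\alpha_k\}<0$ for $j<k$ forcing the value $-1$. Your additional care with the wrap-around pair $(0,n-1)$, using antisymmetry to fix the index order, is exactly the right way to handle the one case the ordering convention might obscure.
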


The following proposition gives a useful relationship between each spinor $(\xi_j, \eta_j)$, and the Euclidean curvature $\mathring{\kappa}_j$ of the corresponding horocycle in the disc model.
\begin{prop}
\label{Prop:kappa_equation}
Let $(\xi, \eta)$ be a real spinor and $H$ the corresponding horocycle in $\D^2$ with a planar spin decoration. Let the Euclidean curvature of $H$ as it appears in the disc model $\D^2$ be $\mathring{\kappa}$. Then
\begin{equation}
\label{Eqn:kappaquation}
\mathring{\kappa} = \xi^2+\eta^2+1.
\end{equation}
\end{prop}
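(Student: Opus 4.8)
The plan is to track the horocycle through the Cayley transform and compute its Euclidean radius in the disc model directly. By the correspondence recalled in \refsec{spinors_horospheres} (exactly as applied to the $\overline{C}_j$), the real spinor $(\xi,\eta)$ describes, in the upper half plane model $\U^2$, a horocycle $\overline{C}$ tangent to $\R$ at $\xi/\eta$, with Euclidean centre $z_0 = \frac{\xi}{\eta} + \frac{1}{2\eta^2}\,i$ and Euclidean radius $\overline{r} = \frac{1}{2\eta^2}$ (cf. \refeqn{eta_kappa}). The horocycle $H$ as it appears in $\D^2$ is then its image under the Cayley transform, $H = \mathfrak{S}(\overline{C})$, so it suffices to compute the Euclidean radius $\mathring{r}$ of $\mathfrak{S}(\overline{C})$ and set $\mathring{\kappa} = 1/\mathring{r}$.

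First I would note that $\mathfrak{S}(z) = (z-i)/(z+i)$ is a Möbius transformation whose pole $-i$ lies in the lower half plane, hence off $\overline{C} \subset \U^2$; therefore $\mathfrak{S}(\overline{C})$ is a genuine Euclidean circle rather than a line. The key tool is the standard formula describing how a Möbius transformation $f(z) = \frac{az+b}{cz+d}$ (with $ad-bc \neq 0$) transforms the radius of a circle of centre $z_0$ and radius $R$ avoiding the pole, namely
\[
R' = \frac{|ad-bc|\,R}{\big|\,|cz_0+d|^2 - |c|^2 R^2\,\big|}.
\]
Were I to avoid quoting this, I would derive it by writing the circle in the form $A|z|^2 + \overline{B}z + B\overline{z} + D = 0$ (with $A,D \in \R$), substituting $z = \mathfrak{S}^{-1}(w)$, and reading off the radius of the resulting circle in $w$; this is routine, if slightly tedious.

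Next I would substitute $a=1,\ b=-i,\ c=1,\ d=i$, so that $ad-bc = 2i$ and $cz_0+d = \frac{\xi}{\eta} + \left(\frac{1}{2\eta^2}+1\right)i$. A short computation gives $|cz_0+d|^2 - R^2 = \left(\frac{\xi}{\eta}\right)^2 + \frac{1}{\eta^2} + 1$, which is positive, whence
\[
\mathring{r} = R' = \frac{2\,\overline{r}}{\left(\frac{\xi}{\eta}\right)^2 + \frac{1}{\eta^2} + 1} = \frac{1/\eta^2}{\left(\frac{\xi}{\eta}\right)^2 + \frac{1}{\eta^2} + 1}.
\]
Taking the reciprocal and clearing $\eta^2$ yields $\mathring{\kappa} = \xi^2 + \eta^2 + 1$, as claimed.

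I expect the main obstacle to be conceptual rather than computational: one must resist transporting Euclidean data naively through $\mathfrak{S}$. In particular, the Euclidean centre and the Euclidean ``diametrically opposite point'' of $\overline{C}$ do not map to their counterparts for $H$, since Möbius maps preserve neither; only the two intersection points of the geodesic from $\xi/\eta$ to $\infty$ with $\overline{C}$ map to points on $H$, and these are the endpoints of a chord, not of a diameter. The clean route is therefore the radius-transformation formula (equivalently, transforming the implicit equation of the circle) rather than tracking diameter endpoints. A quick sanity check, e.g. the spinor $(0,1)$ giving a horocycle tangent to $\partial\D^2$ and passing through the origin, with $\mathring{\kappa} = 2 = 0+1+1$, confirms the normalisation.
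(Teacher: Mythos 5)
Your proof is correct, but it takes a genuinely different route from the paper's. The paper never pushes the circle through the Cayley transform explicitly; it works intrinsically in $\D^2$: it rotates $H$ by $\pi$ about $0$ to obtain a second planar spin-decorated horocycle $H'$, uses the $SL(2,\C)$-equivariance of the correspondence (\reflem{rotate_horocycle_D2}) to identify the spinor of $H'$ as $(-\eta,\xi)$, and then computes the complex $\lambda$-length from $H$ to $H'$ in two ways: geometrically, the hyperbolic distance from $0$ to $H$ is $X = \ln\bigl((1-\mathring{r})/\mathring{r}\bigr)$, the distance between the horocycles is $2X$, and a spin-angle check gives $\theta = 0$, so $\lambda = e^{X} = (1-\mathring{r})/\mathring{r}$; algebraically, \refthm{MathewsThm2} gives $\lambda = \{(\xi,\eta),(-\eta,\xi)\} = \xi^2+\eta^2$. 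Equating the two yields \refeqn{kappaquation}. Your argument instead uses only the explicit half-plane description of the correspondence (centre $\xi/\eta$, radius $1/(2\eta^2)$, as in \refeqn{eta_kappa}) together with elementary M\"obius geometry of circles; your radius formula is the standard one and your evaluation $|cz_0+d|^2 - |c|^2R^2 = \xi^2/\eta^2 + 1/\eta^2 + 1$ is right. What each approach buys: yours is more elementary and bypasses all the spin and $\lambda$-length machinery, in particular the somewhat delicate verification that the spin angle is $0$ rather than $2\pi$ (which pins down the sign of $\lambda$); the paper's is coordinate-light, exercises exactly the toolkit (equivariance and \refthm{MathewsThm2}) it needs later, and treats all nonzero real spinors uniformly. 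That last point is the one small omission in your write-up: you tacitly assume $\eta \neq 0$, whereas the statement also allows spinors $(\xi,0)$, whose horocycle in $\U^2$ is the horizontal line $\Im z = \xi^2$ (centred at $\infty$), to which your circle-radius formula does not apply. This is harmless for the paper's application, since every $\overline{C}_j$ has centre in $\R$, hence $\eta_j \neq 0$, and it is closed by a one-line supplement: $\mathfrak{S}$ maps that line to a circle through $\mathfrak{S}(\infty)=1$ and $\mathfrak{S}(i\xi^2) = (\xi^2-1)/(\xi^2+1)$, which are antipodal on it by symmetry under conjugation, giving curvature $\xi^2+1$ in agreement with \refeqn{kappaquation}.
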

Combining \refprop{kappa_equation} with \reflem{kappa_translation}, we observe that $\kappa = \mathring{\kappa}-2 = \xi^2 + \eta^2 - 1$, so the curvatures of the original circles are also usefully expressed in terms of spinor coordinates.

In order to prove \refprop{kappa_equation} we use some lemmas.
\begin{lemma}
\label{Lem:rotate_horocycle_U2}
Let $H$ be a horocycle in $\mathbb{U}^2$  with a planar spin decoration, corresponding to the spinor $(\xi,\eta)$. Then the planar spin-decorated horocycle $H'$ obtained by rotating $H$ by $\pi$ about $i \in \U^2$
 corresponds to the spinor $(-\eta,\xi)$.
\end{lemma}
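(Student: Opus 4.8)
The plan is to realise the rotation as an element of $SL(2,\R) \subset SL(2,\C)$ and then invoke the $SL(2,\C)$-equivariance of the spinor--horosphere correspondence (\refthm{MathewsThm1}), under which the spinor transforms by matrix--vector multiplication. Rotation by $\pi$ about $i$ is an orientation-preserving isometry of $\U^2$; extended to $\U^3$ it is the elliptic isometry induced by $w \mapsto -1/w$ on $\partial\U^3 = \C \cup \{\infty\}$, represented in $PSL(2,\R)$ by $M = \left(\begin{smallmatrix} 0 & -1 \\ 1 & 0\end{smallmatrix}\right)$. Indeed $w \mapsto -1/w$ fixes $i$ with derivative $-1$ there, so it rotates $\U^2$ about $i$ by $\pi$; it preserves $\R\cup\{\infty\}$ and the upper half space, hence restricts to the asserted rotation of $\U^2$. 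Granting that the relevant lift of $M$ to $SL(2,\C)$ is $M$ itself, equivariance then carries the spinor $(\xi,\eta)$ to $M(\xi,\eta)^{\mathsf T} = (-\eta,\xi)$, as claimed.

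To pin down the lift, I would exhibit the rotation as the terminal point of the path
\[
R_\phi = \begin{pmatrix} \cos\phi & -\sin\phi \\ \sin\phi & \cos\phi \end{pmatrix}, \qquad \phi \colon 0 \to \tfrac{\pi}{2},
\]
of elements of $SL(2,\R)$ fixing $i$, with $R_{\pi/2} = M$. As described in \refsec{spinors_horospheres}, a lift of an isometry to the action on spin-decorated horospheres is determined by a path from the identity; since the path $R_\phi$ lies in $SL(2,\R) \subset SL(2,\C)$ and ends at $M$, the lift acting here is exactly $M$, and not $-M$. Combined with the action of $SL(2,\C)$ on $\C^2$ by matrix--vector multiplication, this yields the image spinor $(-\eta,\xi)$.

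It remains to confirm that $H'$ again carries a planar spin decoration, so that the statement is internally consistent. This is immediate from the characterisation recalled in \refsec{spinors_horospheres} that real spinors correspond precisely to planar spin decorations: since $M \in SL(2,\R)$ and $(\xi,\eta)\in\R^2$, the image $(-\eta,\xi)$ is again real. Geometrically, $w \mapsto -1/w$ preserves $\U^2$ and the sign of the imaginary part, hence fixes the normal direction specifying planarity.

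The main obstacle is the spin bookkeeping: distinguishing the answer $(-\eta,\xi)$ from its negative $(\eta,-\xi)$. These two real spinors describe the same horocycle $H'$ but its two distinct planar spin decorations, and they are interchanged by reversing the direction of rotation (the sign of $\phi$), i.e.\ by choosing the lift $-M$ instead of $M$. Since the underlying $PSL(2,\R)$ isometry determines the spinor only up to sign, getting the correct sign requires genuinely using the path-lifting description of the $SL(2,\C)$-action, together with a fixed convention for the sense in which we rotate by $\pi$. I would therefore make this orientation convention explicit and verify that the path $R_\phi$, $\phi\colon 0 \to \pi/2$, is the one producing $M$ and hence $(-\eta,\xi)$.
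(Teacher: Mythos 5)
Your proposal is correct and takes essentially the same approach as the paper: the paper's proof likewise realises the spin lift as $M(\pi/2) = \left(\begin{smallmatrix} 0 & -1 \\ 1 & 0 \end{smallmatrix}\right)$, specified by the path of rotation matrices $M(t\pi/2)$, $t \in [0,1]$, in $SL(2,\R) \subset SL(2,\C)$, and then applies the equivariance of \refthm{MathewsThm1} to get $(-\eta,\xi)$. The sign issue you flag is resolved in the paper exactly as you suggest, by the convention (stated after the lemma) that the rotation is by $+\pi$, i.e.\ via the path of rotations by angle $\pi t$, which singles out $M(\pi/2)$ rather than $-M(\pi/2)$.
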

Let us briefly explain what is meant by rotating $H$ in the above proposition. As $H$ has a planar spin decoration, it corresponds to a spin-decorated horosphere $\widetilde{H}$ in $\U^3$. This $\widetilde{H}$ has a spin decoration $W$ consisting of associated inward and outward spin decorations, which are lifts of frame fields $f$ along $\widetilde{H}$. An orientation-preserving isometry $\phi \in \Isom^+(\U^3) \cong PSL(2,\C)$ may be applied to the horosphere $\widetilde{H}$ and (via its derivative) the frame fields $f$, yielding a decoration on a horosphere $\widetilde{H'}$. If we specify a lift of $\phi$ to the spin double cover $SL(2,\C)$, then there is a well-defined map of the spin decorations $W$ from $\widetilde{H}$ to $\widetilde{H'}$. A lift of $\phi$ to $SL(2,\C)$ can be specified by a path from the identity to $\phi$ in $PSL(2,\C)$. Here, we have a rotation of $\pi$ about $i \in \U^2$, which naturally extends to a rotation in the 3-dimensional model $\U^3$ about the geodesic normal to $\U^2$ through $i$. This rotation is naturally lifted to the spin double cover of $\Isom^+(\U^3)$ by taking the path of isometries $\phi_t$ over $t \in [0,1]$, where $\phi_t$ is a rotation of angle $\pi t$ about the same axis as $\phi$. The isometries $\phi_t$ then take the spin decoration on $\widetilde{H}$ to a spin decoration on $\widetilde{H'}$, corresponding to the planar spin-decorated $H'$ of the lemma.

\begin{proof}
Consider the matrices $M(\theta)$ for $\theta \in \R$ given by
\[
M(\theta) = \begin{pmatrix}
    \cos(\theta) & -\sin(\theta) \\
    \sin(\theta) & \cos(\theta) \\
\end{pmatrix}
\in SL(2,\R) \subset SL(2,\C).
\]
Each $M(\theta)$ describes an orientation-preserving isometry of $\U^2$ or $\U^3$ as a M\"{o}bius transformation: a rotation of $2\theta$ about $i$ in $\U^2$, or a rotation of $2\theta$ about the geodesic normal to $\U^2$ through $i$ in $\U^3$. Then $M(\frac{t\pi}{2})$, over $t \in [0,1]$, describes a path of isometries from the identity to the rotation described in the lemma. As in fact all $M(\theta)$ lie in $SL(2,\C)$, the double cover of the isometry group $PSL(2,\C)$, then $M(\frac{\pi}{2})$ is the desired spin lift of the isometry described in the lemma: it is a  rotation of $\pi$ (not $-\pi$) about $i$.

We now use the equivariance of \refthm{MathewsThm1}. Since $(\xi,\eta)$ corresponds to $H$, and $H'$ is obtained from $H$ by applying $M(\pi/2)$, then $H'$ corresponds to the spinor
\[
M \left( \frac{\pi}{2} \right) \begin{pmatrix} \xi \\ \eta \end{pmatrix}
= \begin{pmatrix} 0 & -1 \\ 1 & 0 \end{pmatrix} \begin{pmatrix} \xi\\ \eta \end{pmatrix}
= \begin{pmatrix} -\eta \\ \xi \end{pmatrix}.
\]
\end{proof}

Translating \reflem{rotate_horocycle_U2} into the disc model $\D^2$ via the Cayley transform \refeqn{Cayley_transform}, we immediately obtain the following.
\begin{lem}
\label{Lem:rotate_horocycle_D2}
Let $H$ be a horocycle in $\mathbb{D}^2$  with a planar spin decoration, corresponding to the spinor $(\xi,\eta)$. Then the planar spin-decorated horocycle $H'$ obtained by rotating $H$ by $\pi$ about $0 \in \D^2$ corresponds to the spinor $(-\eta,\xi)$.
\qed
\end{lem}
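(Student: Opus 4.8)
The plan is to deduce this statement from \reflem{rotate_horocycle_U2} by conjugating with the Cayley transform, using that $\mathfrak{S}$ is an orientation-preserving hyperbolic isometry from $\U^2$ to $\D^2$ and that the spinor attached to a horocycle in $\D^2$ is, by convention, the spinor attached to its preimage horocycle in $\U^2$ under $\mathfrak{S}^{-1}$. Concretely, if $H \subset \D^2$ carries a planar spin decoration corresponding to $(\xi,\eta)$, this means $H = \mathfrak{S}(H_{\U})$ for a planar spin-decorated horocycle $H_{\U} \subset \U^2$ with spinor $(\xi,\eta)$. Writing $R^{\D}_0$ for the rotation by $\pi$ about $0$ and $R^{\U}_i$ for the rotation by $\pi$ about $i$, these are intertwined by $\mathfrak{S}$, so that $H' = R^{\D}_0(H) = \mathfrak{S}\bigl(R^{\U}_i(H_{\U})\bigr)$; then \reflem{rotate_horocycle_U2} identifies the spinor of $R^{\U}_i(H_{\U})$ as $(-\eta,\xi)$, and carrying this through $\mathfrak{S}$ yields the claimed spinor $(-\eta,\xi)$ in $\D^2$.

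The first, routine, step is to confirm from \refeqn{Cayley_transform} that $\mathfrak{S}(i) = \frac{i-i}{i+i} = 0$, so that $\mathfrak{S}$ sends the fixed point $i$ of $R^{\U}_i$ to the fixed point $0$ of $R^{\D}_0$. Since isometric conjugation carries a rotation to a rotation about the image of its centre through the same angle, it follows that $R^{\D}_0 = \mathfrak{S} \circ R^{\U}_i \circ \mathfrak{S}^{-1}$ as elements of $PSL(2,\C)$. Applying this to $H = \mathfrak{S}(H_{\U})$ recovers $H' = \mathfrak{S}(R^{\U}_i(H_{\U}))$, so that everything reduces to pushing the spinor of $R^{\U}_i(H_{\U})$ forward unchanged through $\mathfrak{S}$.

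The real content, and the step I expect to need care, is the matching of \emph{spin} decorations, since the correspondence of \refthm{MathewsThm1} distinguishes spin lifts differing by $2\pi$, and a rotation by $+\pi$ versus $-\pi$ about the same axis gives genuinely different spin decorations. In \reflem{rotate_horocycle_U2} the spin lift of $R^{\U}_i$ is pinned down by the path $t \mapsto M(t\pi/2)$, $t \in [0,1]$, of rotations by angle $\pi t$ about the geodesic through $i$ normal to $\U^2$. I would check that conjugating this path by $\mathfrak{S}$ gives exactly the path of rotations by angle $\pi t$ about the geodesic through $0$ normal to $\D^2$, which is precisely the path defining the natural spin lift of $R^{\D}_0$ used in the present lemma. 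This holds because conjugation by a fixed element of $PSL(2,\C)$ is continuous, fixes the identity, and, being isometric, preserves rotation angles, so the two lifted paths correspond and the associated inward and outward spin decorations are carried to one another. With the spin lifts matched, the $SL(2,\C)$-equivariance of \refthm{MathewsThm1} delivers the spinor $(-\eta,\xi)$ with no residual sign ambiguity; this sign bookkeeping is the sole pitfall in an otherwise immediate translation.
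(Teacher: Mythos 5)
Your proposal is correct and is essentially the paper's own argument: the paper obtains this lemma by "translating" \reflem{rotate_horocycle_U2} into $\D^2$ via the Cayley transform, i.e.\ exactly the conjugation $R^{\D}_0 = \mathfrak{S} \circ R^{\U}_i \circ \mathfrak{S}^{-1}$ you describe, with the spinor of a disc-model horocycle defined through $\mathfrak{S}$. The only difference is that the paper treats this as immediate, whereas you spell out the matching of spin lifts via the conjugated path of rotations --- a worthwhile check, but the same route.
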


\begin{proof}[Proof of \refprop{kappa_equation}]
Let $H'$ be the planar spin-decorated horocycle obtained by rotating $H$ by $\pi$ about $0 \in \D^2$. This is a Euclidean rotation as well as a hyperbolic rotation, so $H'$ has the same radius and curvature as $H$. By \reflem{rotate_horocycle_D2}, $H'$ corresponds to the spinor $(-\eta, \xi)$.

Let $X$ be the hyperbolic distance from $0 \in \D^2$ to $H$. By symmetry, the hyperbolic distance from $0$ to $H'$ is also $X$, so the hyperbolic distance from $H$ to $H'$ is $2X$. The metric in $\D^2$ is given by
\[ds^2 = \frac{4\ dr^2}{(1-r^2)^2},\]
where $r$ is a Euclidean radial coordinate. Since $H$ and $H'$ have Euclidean radius $\mathring{r}$, the line from $0$ to $H$ is a Euclidean line from $r=0$ to $r=1-2\mathring{r}$. Thus
\[
X = \int_0^{1-2\mathring{r}} \frac{2 dr}{1-r^2} = 
\ln \left( \frac{1-\mathring{r}}{\mathring{r}} \right).
\]

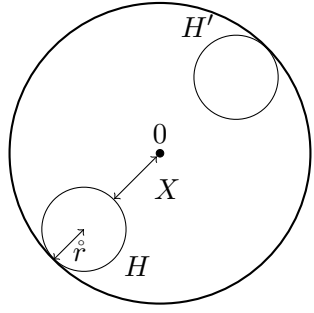
\begin{figure}[!ht]
\begin{center}
\begin{tikzpicture}[scale=1]
\draw[thick] (0,0) circle (2);
\draw[rotate=45] (1.43, 0) circle (0.56);
\draw[<->,rotate=45] (-1.43,0) -- (-2,0) node[right,pos=0.7] {$\mathring{r}$};
\draw[rotate=45] (-1.43,0) circle (0.56);
\draw[rotate=45,<->] (-2+1.14,0) -- (-0.06,0) node[anchor=north west,pos=0.7] {$X$}; 
\filldraw[black] (0,0) circle (0.05) node[above] {$0$};
\draw (-0.3,-1.5) node {$H$};
\draw (0.5,1.7) node {$H'$};
\end{tikzpicture}
\caption{\textit{Computing the radius of $H$}}
\end{center}
\end{figure}

Let us now consider the complex lambda length from $H$ to $H'$. We thus consider the 3-dimensional spin-decorated horospheres $\widetilde{H}, \widetilde{H'}$ corresponding to $H,H'$. The distance between $\widetilde{H}$ and $\widetilde{H'}$ is $\rho = 2X$. Consider the angle $\theta$ between the inward spin frame $W_{in}$ of $\widetilde{H}$ and the outward spin frame $W'_{out}$ of $\widetilde{H'}$ along the common perpendicular from $H$ to $H'$. Since both spin decorations are planar, we have $\theta = 0$ or $2\pi$ mod $4\pi$. Following \cite[defn. 4.3]{Mathews_Spinors_horospheres}, the outward spin frame associated to $W_{in}$ is obtained from $W_{in}$ by a rotation of angle $-\pi$ about the decoration direction normal to $\D^2$. The outward spin decoration of $W'_{out}$ is obtained from this associated frame by a rotation of angle $\pi$ about the same normal direction to $\D^2$ (i.e. the rotation which takes $\widetilde{H}$ to $\widetilde{H'}$). We conclude that $\theta = 0$. Thus the complex lambda length $\lambda$ from $H$ to $H'$ is
\begin{equation}
\label{Eqn:lambda_eqn_1}
\lambda = \exp \left( \frac{\rho + i \theta}{2} \right)
= e^{X} = \frac{1-\mathring{r}}{\mathring{r}}.
\end{equation}

On the other hand, applying \refthm{MathewsThm2} to the two spinors involved yields
\begin{equation}
\label{Eqn:lambda_eqn_2}
\lambda = \{(\xi,\eta),(-\eta,\xi)\}= \xi^2 + \eta^2.
\end{equation}
Equating \refeqn{lambda_eqn_1} and \refeqn{lambda_eqn_2} and using $\mathring{\kappa} = 1/\mathring{r}$ then gives the desired result.
\end{proof}

\section{Spinor Calculations}
\label{Sec:spinor_calc}

We now present some calculations that are not strictly necessary for the main result, but may be of interest, and may motivate some expressions arising in the main result. The reader interested only in the proof of the main result may skip this section.

\subsection{Equation Relating $\eta_j$}

It follows from \reflem{spinor_signs} that, for $0 \leq j \leq n-2$,
\begin{equation}
\label{Eqn:BilinearRelation}
\frac{\xi_{j+1}}{\eta_{j+1}} - \frac{\xi_j}{\eta_j} =\ \frac{1}{\eta_j \eta_{j+1}},
\end{equation}
which is the distance along the real line between the centres of the horospheres $\overline{C}_j$ and $\overline{C}_{j+1}$.
Summing these distances over all $j$ from $0$ to $n-2$, we obtain
\begin{equation}
\label{Eqn:telescope}
\frac{\xi_{n-1}}{\eta_{n-1}} - \frac{\xi_0}{\eta_0}
=
\sum_{j=0}^{n-2} \frac{1}{\eta_j \eta_{j+1}}.
\end{equation}
Since $\{\alpha_0, \alpha_{n-1} \} = \xi_0 \eta_{n-1} - \xi_{n-1} \eta_0 = -1$, we also have 
\begin{equation}
\label{Eqn:basicetarelation}
\sum_{j=0}^{n-2} \frac{1}{\eta_j \eta_{j+1}}  = \frac{1}{\eta_0\eta_{n-1}}.
\end{equation}

\subsection{Equation Relating $\overline{\kappa}_j$}

Substituting $\eta_j$ for $\overline{\kappa}_j$ in \refeqn{basicetarelation} using \refeqn{eta_kappa} yields an equation relating the curvatures $\overline{\kappa}_j$:
\begin{equation}
\label{flatequation}
\sum_{j=0}^{n-2} \frac{1}{\sqrt{\overline{\kappa}_j\overline{ \kappa_{j+1}}}}  = \frac{1}{\sqrt{\overline{\kappa}_0 \overline{\kappa}_{n-1}}}.
\end{equation}
We can obtain  a polynomial relation between  the $\overline{\kappa}_j$ by clearing denominators and squaring out roots. 

This is a relation between the curvatures in a flat $n$-flower, i.e. when $\kappa_\infty = 0$.

For example in the case $n=3$, the equation
\[
\frac{1}{\sqrt{\overline{\kappa}_0 \overline{\kappa}_1}} + \frac{1}{\sqrt{\overline{\kappa}_1 \overline{\kappa}_2}} = \frac{1}{\sqrt{\overline{\kappa}_0\overline{\kappa}_2}}
\]
leads to the polynomial
\[
\overline{\kappa}_0^2 + \overline{\kappa}_1^2 + \overline{\kappa}_2^2 -2\overline{\kappa}_0\overline{\kappa}_1 -2\overline{\kappa}_1\overline{\kappa}_2 - 2\overline{\kappa}_2\overline{\kappa}_0=0,
\]
which is equivalent to Descartes' equation \refeqn{Descartes} with $\kappa_{\infty}=0$.

\subsection{Recursive Computation of Spinor Coordinates}
\label{Sec:recursive_computation}

We show that, starting from $(\xi_0, \eta_0)$, the remaining $(\xi_j, \eta_j)$ can be calculated recursively using previous $(\xi_j, \eta_j)$ and the Euclidean curvatures $\mathring{\kappa}_j$ in the disc model. (Using \reflem{kappa_translation}, one could use the $\kappa_j$ instead of the $\mathring{\kappa}_j$.)

\begin{lemma}
The spinors $(\xi_j,\eta_j)$ satisfy the following:
\[
\xi_0 = 0, \quad \eta_0 = \sqrt{\mathring{\kappa}_0-1},
\]
and for $0 \leq j \leq n-2$,
\begin{align}
\label{Eqn:xi2}
\xi_{j+1} &= \left(\frac{-\xi_j + \eta_j\sqrt{(\mathring{\kappa}_{j+1}-1)(\mathring{\kappa}_j-1)-1}}{\eta_j(\mathring{\kappa}_j-1)} \right) \xi_j + \frac{1}{\eta_j },  \\
\label{Eqn:eta}
\eta_{j+1} &= \frac{-\xi_j + \eta_j\sqrt{(\mathring{\kappa}_{j+1}-1)(\mathring{\kappa}_j-1)-1}}{\mathring{\kappa}_j-1}.
\end{align}
\end{lemma}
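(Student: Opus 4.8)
The plan is to treat the base case directly and then obtain each $(\xi_{j+1},\eta_{j+1})$ from $(\xi_j,\eta_j)$ as the unique admissible solution of a small system. For the base case, recall from \refsec{flower_to_horocycles} that $\overline{C}_0$ is tangent to the real line at $0$, so its centre $\xi_0/\eta_0 = 0$ forces $\xi_0 = 0$; then \refprop{kappa_equation} gives $\mathring{\kappa}_0 = \xi_0^2 + \eta_0^2 + 1 = \eta_0^2 + 1$, and since we have chosen $\eta_0 > 0$ this yields $\eta_0 = \sqrt{\mathring{\kappa}_0 - 1}$.

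For the recursive step I would record the two relations that $(\xi_{j+1},\eta_{j+1})$ must satisfy. First, \refprop{kappa_equation} gives the quadratic (``curvature'') relation $\xi_{j+1}^2 + \eta_{j+1}^2 = \mathring{\kappa}_{j+1} - 1$; the same proposition applied to the known spinor gives $\xi_j^2 + \eta_j^2 = \mathring{\kappa}_j - 1$, which I will use repeatedly to simplify. Second, \reflem{spinor_signs} gives the linear (``bilinear'') relation $\xi_j \eta_{j+1} - \xi_{j+1}\eta_j = -1$. Geometrically these cut out the intersection of a circle of radius $\sqrt{\mathring{\kappa}_{j+1}-1}$ with a line, so I expect exactly two solutions, distinguished by a sign. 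The computation is then routine: solve the linear relation for $\xi_{j+1} = (\xi_j \eta_{j+1} + 1)/\eta_j$ (legitimate since $\eta_j>0$), substitute into the curvature relation, and collect terms to obtain a quadratic in $\eta_{j+1}$ whose leading coefficient is $\xi_j^2 + \eta_j^2 = \mathring{\kappa}_j - 1$. Applying the quadratic formula and simplifying the discriminant via $\xi_j^2 + \eta_j^2 = \mathring{\kappa}_j - 1$, I expect the quantity under the root to collapse exactly to $\eta_j^2\bigl((\mathring{\kappa}_j - 1)(\mathring{\kappa}_{j+1}-1) - 1\bigr)$, reproducing \refeqn{eta} up to the choice of sign; back-substituting into $\xi_{j+1} = (\xi_j \eta_{j+1}+1)/\eta_j$ then gives \refeqn{xi2}.

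The one genuinely non-mechanical point, where I would concentrate the argument, is the choice of root. Both algebraic solutions satisfy the two defining equations, so a geometric criterion must select between them. Since $\xi_j \geq 0$, $\eta_j > 0$, and the denominator $\mathring{\kappa}_j - 1 = \xi_j^2 + \eta_j^2$ is strictly positive, the ``minus'' root has non-positive numerator $-\xi_j - \eta_j\sqrt{\cdots}$ and hence yields $\eta_{j+1} \leq 0$, contradicting our standing normalisation $\eta_{j+1} > 0$. A valid solution with $\eta_{j+1}>0$ exists by construction, namely the spinor of $\overline{C}_{j+1}$, so it must be the ``plus'' root, which is precisely the stated formula. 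In particular the quantity under the square root is then non-negative (equivalently, the line does meet the circle), as it must be for the construction to be consistent, so no separate positivity verification is needed. This selection completes the recursive step and hence the lemma.
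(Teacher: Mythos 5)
Your proposal is correct and takes essentially the same approach as the paper: both arguments establish the base case from tangency at $0$ together with \refprop{kappa_equation}, then combine the bilinear relation of \reflem{spinor_signs} (solved linearly for $\xi_{j+1}$) with the curvature relation $\xi_{j+1}^2+\eta_{j+1}^2 = \mathring{\kappa}_{j+1}-1$ to get a quadratic in $\eta_{j+1}$, and select the $+$ root by positivity of $\eta_{j+1}$. The only cosmetic difference is that the paper justifies positivity under the square root via \reflem{kappa_translation} (each $\mathring{\kappa}_j > 2$), whereas you get it for free from the existence of the actual spinor; both are valid.
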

By \reflem{kappa_translation} each $\mathring{\kappa}_j > 2$, so each quantity under a square root sign is positive, and we take the positive square root in each case. 

\begin{proof}
Since $\overline{C}_0$ is tangent to the real line at $\xi_0/\eta_0 = 0$, we have $\xi_0 = 0$.
From \refprop{kappa_equation} then  $\mathring{\kappa}_0 = \eta_0^2 + 1$, so $\eta_0^2 = \mathring{\kappa}_0 - 1$. Since all $\eta_j$ are taken to be positive then $\eta_0$ is as claimed.

From \refeqn{BilinearRelation} we have
\begin{equation}
\label{Eqn:xi}
\xi_{j+1} = \frac{\eta_{j+1}}{\eta_j}\xi_j+\frac{1}{\eta_j},
\end{equation}
which shows that \refeqn{xi2} follows from \refeqn{eta}. 

Squaring \refeqn{xi} and substituting the resulting expression for $\xi_{j+1}^2$ into 
\refeqn{kappaquation} for $\mathring{\kappa}_{j+1}$ yields
\[
\mathring{\kappa}_{j+1} = \eta_{j+1}^2 + \frac{\eta_{j+1}^2}{\eta_j^2} \xi_j^2 + 2\frac{\eta_{j+1}}{\eta_j^2} \xi_j+ \frac{1}{\eta_j^2}+1.
\]
This is a quadratic equation for $\eta_{j+1}$; solving in the standard way and using \refeqn{kappaquation} for $\mathring{\kappa}_j$, we obtain
\[
\eta_{j+1} = \frac{-\xi_j \pm  \eta_j\sqrt{(\mathring{\kappa}_{j+1}-1)(\mathring{\kappa}_j-1)-1}}{\mathring{\kappa}_j-1}.
\]
As all $\xi_j, \eta_j$, and $\mathring{\kappa}_j - 1$ are non-negative, we must take the $+$ in the $\pm$ in order for $\eta_{j+1}$ to be positive, yielding \refeqn{eta} as desired.
\end{proof}

\subsection{Variables $m_j$}

The square root expressions in the recursive equations \refeqn{xi2} and \refeqn{eta} for $\xi$ and $\eta$ involve square root expressions which arise throughout; they are in fact the $m_j$ of our main theorem. By \reflem{kappa_translation} we have $\mathring{\kappa}_j-1 = \kappa_j + 1$, so the two expressions given in the definition below are equal.
\begin{definition} 
\label{Def:m_j}
We define
\[
m_0 = \sqrt{\mathring{\kappa}_0 -1} = \sqrt{\kappa_0 + 1}
\]
and for $1 \leq j \leq n-1$,
\[
\label{newVariables}
m_j = \sqrt{ \left( \mathring{\kappa}_j - 1 \right) \left( \mathring{\kappa}_{j-1}-1 \right) -1 }
= \sqrt{ \left( \kappa_j + 1 \right) \left( \kappa_{j-1} + 1 \right) - 1}.
\]
\end{definition}

From this definition, a straightforward induction allows us to express each $\mathring{\kappa}_j$ or $\kappa_j$ in terms of the $m_j$; we state this in the following lemma and omit the proof. We observe that products arising here also appear in the main theorem. As usual, the empty product is taken to be $1$.
\begin{lemma}
\label{Lem:mKappaLemma}
For $j>0$ we have 
\begin{equation}
\kappa_j + 1 = \mathring{\kappa}_j-1 = \begin{cases}
\frac{m_0^2 \prod_{k=1}^{\frac{j}{2}} (m_{2k}^2+1)}{\prod_{k=1}^{\frac{j}{2}} (m_{2k-1}^2+1)} \text{, if j is even and }j>0\\
\frac{\prod_{k=0}^{\frac{j-1}{2}} (m_{2k+1}^2+1)}{m_0^2 \prod_{k=1}^{\frac{j-1}{2}} (m_{2k}^2+1)} \text{, if j is odd and }j>0.
\end{cases}
\end{equation}
\qed
\end{lemma}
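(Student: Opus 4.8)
The plan is to run a single induction on $j$, driven by a recurrence that is already packaged inside \refdef{m_j}. Writing $a_j = \mathring{\kappa}_j - 1 = \kappa_j + 1$ for brevity (the two expressions agreeing by \reflem{kappa_translation}), \refdef{m_j} records exactly two facts: $a_0 = m_0^2$, and, for $j \geq 1$, $m_j^2 = a_j a_{j-1} - 1$. The second rearranges to
\[
a_j = \frac{m_j^2 + 1}{a_{j-1}},
\]
which is the whole engine of the argument.

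First I would dispatch the base cases straight from this recurrence, reading the empty products in the target formulas as $1$: at $j=1$ we get $a_1 = (m_1^2+1)/a_0 = (m_1^2+1)/m_0^2$, matching the odd formula, and at $j=2$ we get $a_2 = (m_2^2+1)/a_1 = m_0^2(m_2^2+1)/(m_1^2+1)$, matching the even formula. For the inductive step I would assume the claimed formula for $a_{j-1}$ and substitute it into $a_j = (m_j^2+1)/a_{j-1}$. Dividing by $a_{j-1}$ inverts its fraction, so the numerator product of $a_{j-1}$ becomes the denominator product of $a_j$ and conversely; this is exactly the interchange between the even and odd cases, consistent with $j-1$ and $j$ having opposite parity. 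It then remains only to place the freshly introduced factor $m_j^2+1$ into the correct product: since its subscript is $j$, it extends the product of the $(m_{2k}^2+1)$ when $j$ is even (new index $k=j/2$) and the product of the $(m_{2k+1}^2+1)$ when $j$ is odd (new index $k=(j-1)/2$), precisely the terms demanded by the stated formula.

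The only care required is the index arithmetic on the product ranges, for example checking that $\prod_{k=0}^{(j-2)/2}(m_{2k+1}^2+1)$, the numerator inherited from the odd formula for $a_{j-1}$, coincides with $\prod_{k=1}^{j/2}(m_{2k-1}^2+1)$, the denominator demanded by the even formula for $a_j$; this follows after reindexing $k \mapsto k-1$. There is no genuine obstacle here, since all the content lies in matching the upper limits of the products against the parity of $j$. This is why the one-line recurrence together with a check of the two base cases suffices, and why the statement may reasonably be asserted with its routine proof omitted.
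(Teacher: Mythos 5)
Your proof is correct and is precisely the ``straightforward induction'' from \refdef{m_j} that the paper alludes to when it omits the proof of \reflem{mKappaLemma}: the recurrence $a_j = (m_j^2+1)/a_{j-1}$, the base case(s), and the parity-swapping reindexing all check out. Nothing further is needed.
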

For convenience, we express the above expansion of $\kappa_j+1$ in terms of the $m_k$ as $g_j$.

\subsection{Closed Form for Spinor Coordinates}

Between them, \refeqn{xi2} and \refeqn{eta} allow us to iteratively find $(\xi_j,\eta_j)$ in terms of the $\mathring{\kappa}_j$, or, using \refdef{m_j}, in terms of the $m_j$. In this section we describe closed forms for $\xi_j$ and $\eta_j$ in terms of the $m_j$. 

Since 
$\overline{C}_0$ is tangent to the real line at $0$, so that $\xi_0 = 0$, then forming a telescoping sum from \refeqn{BilinearRelation}, as in  \refeqn{telescope}, we have
\begin{equation} 
\label{Eqn:XiEquation}
\xi_j = \frac{1}{\eta_{j-1}} + \eta_j \sum_{k=1}^{j-1} \frac{1}{\eta_{k-1}\eta_k},
\end{equation}
so it suffices to find a closed form for $\eta$.

\begin{lemma}[A Closed Form for $\eta_j$]
\label{Lem:EtaLemma}
\[
\eta_j = \begin{cases}
\frac{m_0\left(\prod_{n=1}^j(m_n-i)+\prod_{n=1}^j(m_n+i)\right)}{2\prod_{n=1}^{\frac{j}{2}}\left(m_{2n-1}^2+1\right)}, & \text{ if j is even}\\
\frac{\left(\prod_{n=1}^j(m_n-i)+\prod_{n=1}^j(m_n+i)\right)}{2m_0\prod_{n=1}^{\frac{j-1}{2}}\left(m_{2n}^2+1\right)}, & \text{ if j is odd.}
\end{cases}
\]
\end{lemma}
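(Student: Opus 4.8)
The plan is to package the two real coordinates into the complex variable $z_j = \xi_j + i \eta_j$ flagged in the overview, and to show that the two recursions \refeqn{xi} and \refeqn{eta} collapse into a single strikingly simple complex recursion. First I would record the three ingredients already available. The telescoping relation \refeqn{xi} gives $\xi_{j+1} = \frac{\eta_{j+1}}{\eta_j}\xi_j + \frac{1}{\eta_j}$, which combines with $z_{j+1} = \xi_{j+1} + i\eta_{j+1}$ to yield $z_{j+1} = \frac{\eta_{j+1} z_j + 1}{\eta_j}$; equation \refeqn{eta} gives $\eta_{j+1} = \frac{-\xi_j + \eta_j m_{j+1}}{\mathring{\kappa}_j - 1}$; and \refprop{kappa_equation} gives $\mathring{\kappa}_j - 1 = \xi_j^2 + \eta_j^2 = z_j \overline{z_j}$.

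Substituting the second and third identities into the first, the numerator becomes $(-\xi_j + \eta_j m_{j+1})z_j + z_j\overline{z_j} = z_j\left(-\xi_j + \overline{z_j} + \eta_j m_{j+1}\right)$, and the key simplification is $-\xi_j + \overline{z_j} = -i\eta_j$, so the bracket equals $\eta_j(m_{j+1}-i)$. After cancelling $\eta_j$ and one factor of $z_j$ against $z_j\overline{z_j}$, this leaves the clean recursion
\[
z_{j+1} = \frac{m_{j+1}-i}{\overline{z_j}}.
\]
Establishing this identity is the heart of the argument, and I expect it to be the main (though short) obstacle: it is exactly where the variables $m_j$ and the factors $m_j \pm i$ of the statement enter, and everything afterwards is bookkeeping.

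With this recursion in hand I would prove by induction that $z_j = \dfrac{i \prod_{n=1}^j (m_n - i)}{D_j}$ for a positive real quantity $D_j$. Writing $P_j^{\pm} = \prod_{n=1}^j(m_n\pm i)$, the base case is $z_0 = i\eta_0 = i m_0$, forcing $D_0 = 1/m_0$. For the inductive step, since $D_j$ is real we have $\overline{z_j} = -iP_j^{+}/D_j$, and the recursion gives $z_{j+1} = i P_j^{-}(m_{j+1}-i) D_j / (P_j^+ P_j^-) = i P_{j+1}^{-}/D_{j+1}$ provided $D_{j+1} = P_j^+P_j^-/D_j = \prod_{n=1}^j (m_n^2+1)/D_j$. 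It then remains to check that the explicit even/odd expressions for $D_j$ stated in the lemma satisfy this two-term recursion; this is a routine calculation, splitting $\prod_{n=1}^j(m_n^2+1)$ into its odd- and even-indexed factors and matching parities, which I would carry out but not belabour. Positivity of $D_j$ is immediate since it is a ratio of products of positive quantities.

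Finally I would extract $\eta_j$ by taking imaginary parts. Since $\Im(iP_j^-) = \Re(P_j^-) = \tfrac12(P_j^- + P_j^+)$ and $D_j$ is real, I get $\eta_j = \Im(z_j) = \dfrac{P_j^- + P_j^+}{2 D_j}$; substituting the even and odd forms of $D_j$ reproduces the two cases of the lemma exactly. As a free byproduct, the real part $\xi_j = \Re(z_j) = \Im(P_j^+)/D_j$ gives a companion closed form consistent with \refeqn{XiEquation}, though this is not needed here.
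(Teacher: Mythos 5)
Your proof is correct, but it takes a genuinely different route from the paper's. The paper stays entirely in real variables: it substitutes \refeqn{XiEquation} into \refeqn{eta} to eliminate $\xi_j$, solves for the telescoping sum $\sum 1/(\eta_{n-1}\eta_n)$, and thereby derives a \emph{three-term} real recursion
\[
\eta_j = \frac{-1+\eta_{j-1}^2 g_{j-2}+\eta_{j-1}\eta_{j-2}(m_j-m_{j-1})}{\eta_{j-2}\, g_{j-1}},
\]
(with $g_j$ as in \reflem{mKappaLemma}), and then verifies the closed form by an induction with base cases $\eta_0 = m_0$, $\eta_1 = m_1/m_0$, whose details it omits. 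You instead complexify immediately, packaging \refeqn{xi}, \refeqn{eta} and \refprop{kappa_equation} into the one-step recursion $z_{j+1} = (m_{j+1}-i)/\overline{z_j}$; I checked your key simplification $-\xi_j + \overline{z_j} = -i\eta_j$ and the subsequent cancellation, and it is right, as is the parity bookkeeping $D_j D_{j+1} = \prod_{n=1}^{j}(m_n^2+1)$ for the claimed even/odd denominators. Your route is arguably cleaner: the induction becomes essentially trivial, the factors $m_n \pm i$ appear for a transparent structural reason rather than emerging from an opaque computation, and realness of $D_j$ is all the conjugation step needs (positivity is a bonus). Notably, your recursion is the conjugate of the identity $z_{j-1}\overline{z_j} = m_j + i$ that the paper only establishes later, in the proof of the main theorem in \refsec{proof}, and there by a different argument (computing $\Re(z_{j-1}\overline{z_j}) = \pm m_j$ and fixing the sign via the decreasing sequence of arguments); you derive it purely algebraically from the recursions, with no sign analysis needed, since \refeqn{eta} already has the correct square-root branch built in. What the paper's approach buys is consistency with the recursive real framework it has just set up in the preceding subsections; what yours buys is brevity, a conceptual explanation of the products $\prod_{n}(m_n \pm i)$, and a closed form for $\xi_j$ for free.
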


\begin{proof} 
Substituting $m_j$ into \refeqn{eta} 
yields
\[
\eta_{j+1} = \frac{-\xi_j + \eta_j m_{j+1}}{\mathring{\kappa}_j -1}.
\]
Replacing $\xi_j$ using \refeqn{XiEquation}, and using the definition of $g_j$ after \reflem{mKappaLemma} gives
\begin{equation}
\label{Eqn:etaformB}
\eta_{j} = \frac{-1 + \eta_{j-1}\eta_{j-2}\left(m_j- \sum_{n=1}^{j-2} \frac{1}{\eta_{n-1}\eta_n}\right)}{\eta_{j-2}g_{j-1}}.
\end{equation}
Rearrange to solve for the sum:
\[
\sum_{n=1}^{j-2} \frac{1}{\eta_{n-1}\eta_n} = m_j - \frac{\eta_{j} \eta_{j-2}g_{j-1} +1}{\eta_{j-1}\eta_{j-2}}.
\]
Then, taking one term out of the sum, we have
\[
\sum_{n=1}^{j-2} \frac{1}{\eta_{n-1}\eta_n} = \sum_{n=1}^{j-3} \frac{1}{\eta_{n-1}\eta_n} + \frac{1}{\eta_{j-3}\eta_{j-2}}
= m_{j-1} - \frac{\eta_{j-1} g_{j-2}}{\eta_{j-2}}.
\]
Substituting this expression for the sum back into \refeqn{etaformB}, we obtain a recursive relation on $\eta_j$:
\[
\eta_j = \frac{-1+\eta_{j-1}^2g_{j-2}+\eta_{j-1}\eta_{j-2}(m_j-m_{j-1})}{\eta_{j-2}g_{j-1}}.
\]
The closed form is then obtained by induction,
taking $\eta_0=m_0$, $\eta_1=m_1/m_0$ for the base cases. 
\end{proof}

\subsection{Polynomial Relations Between $m_j$}

Multiplying \refeqn{basicetarelation} by $\eta_0\eta_1...\eta_{n-1}$ to clear denominators yields
\[
\left( \sum_{j=0}^{n-2} \eta_0\eta_1 \cdots \hat{\eta}_j\hat{\eta}_{j+1} \cdots \eta_{n-1} \right)-\eta_1\eta_2...\eta_{n-2}=0,
\]
where the hats indicate excluding those terms from the product. Substitution using  \reflem{EtaLemma} yields polynomial relations among the $m_j$, and these polynomials contain as factors the equations of our main result.

For example, when $n=3$ and $n=4$ we obtain, respectively,
\begin{gather*}
\frac{m_1 \left(m_1 m_0^2+m_2 m_0^2-m_1^2-1\right)}{m_0 \left(m_1^2+1\right)}=0, \\
\frac{m_1 \left(m_1 m_2-1\right) \left(-m_2^2+m_1 m_2+m_3 m_2+m_1 m_3-2\right)}{\left(m_1^2+1\right) \left(m_2^2+1\right)}=0,
\end{gather*}
and we expect that only the factor containing all the curvatures can be zero in general. Indeed, in the $n=3$ case this factor is an instance of \refeqn{mquationOdd} (or \refeqn{mquationOdd_alt}) from the  main theorem; and in the $n=4$ case an instance of \refeqn{mquationEven} (or \refeqn{mquationEven_alt}). This is how the equations of the main theorem were found.

\section{Proof of the Generalised Descartes Theorem}
\label{Sec:proof}

After the constructions of \refsec{flower_to_horocycles} and \refsec{horospheres_to_spinors}, we have real spinors $(\xi_j, \eta_j)$ for $0 \leq j \leq n-1$, describing horocycles with planar spin decorations arising from an $n$-flower. 

For each $j$, we define a complex number
\[
z_j = \xi_j + i \eta_j
\]
(here $i^2 = -1$ as usual).

From \reflem{spinor_signs} we have, for $1 \leq j \leq n-1$,
\[
\xi_{j-1}\eta_{j} - \xi_{j}\eta_{j-1} = -1,
\quad \text{or equivalently,} \quad
\Im \left( z_{j-1} \, \overline{z_{j}} \right) = 1. 
\]
which, being the nonzero component of the cross product $(\xi_{j-1},\eta_{j-1},0) \times (\xi_{j},\eta_{j},0)$, means that the $z_j$ occur in clockwise order around $0$ in the complex plane and each successive pair $z_{j-1}, z_{j}$ spans a parallelogram of area $1$. Since we chose $\xi_0 = 0$ and all other $\xi_j, \eta_j>0$ in \refsec{horospheres_to_spinors}, we have $z_0$ on the positive imaginary axis, and all the other $z_j$ lying in the top right quadrant of the complex plane as in \reffig{Parallelograms}. 
The sequence $\arg z_j$ for $0 \leq j \leq n-1$ is thus strictly decreasing in $(0, \pi/2]$.

Moreover, \reflem{spinor_signs} says that $\xi_0 \eta_{n-1} - \xi_{n-1} \eta_0 = \Im \left( z_0 \, \overline{z_{n-1}} \right) = -1$, so the first and last of the $z_j$ also span a parallelogram of area 1.
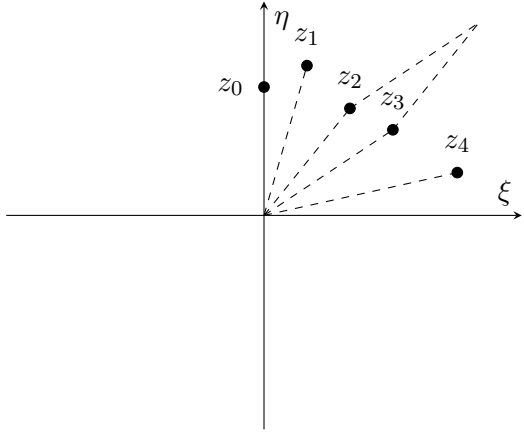
\begin{figure}
\centering
\begin{tikzpicture}
\begin{axis}[
    axis lines=middle,
    xmin=-12, xmax=12,
    ymin=-10, ymax=10,
    xtick=\empty, ytick=\empty, ylabel={$\eta$}, xlabel={$\xi$}
]
\addplot [only marks] table {
0   6
2   7
4   5
6   4
9   2
};

\node[label={180:{$z_0$}}] at (axis cs:0,6) {};
\node[label={90:{$z_1$}}] at (axis cs:2,7) {};
\node[label={90:{$z_2$}}] at (axis cs:4,5) {};
\node[label={90:{$z_3$}}] at (axis cs:6,4) {};
\node[label={90:{$z_4$}}] at (axis cs:9,2) {};
\addplot [domain=0:2, samples=2, dashed] {3.5*x};
\addplot [domain=0:4, samples=2, dashed] {(5/4)*x};
\addplot [domain=0:6, samples=2, dashed] {(4/6)*x};
\addplot [domain=0:9, samples=2, dashed] {(2/9)*x};

\addplot [domain=6:10, samples=2, dashed] {(5/4)*x-3.5};
\addplot [domain=4:10, samples=2, dashed] {(4/6)*x+7/3};
\end{axis}
\end{tikzpicture}
\caption{Parallelograms formed by the $z_j$.}
\label{Fig:Parallelograms}
\end{figure}

From \refprop{kappa_equation} (and the remark afterward applying \reflem{kappa_translation}) we have 
\[
\kappa_j = \xi_j^2 + \eta_j^2 - 1 = |z_j|^2 - 1
\]
so the problem of finding a relation among the $\kappa_j$ (i.e. generalising Descartes' theorem) is reduced to a plane Euclidean geometry problem: given vectors spanning parallelograms of area 1 in clockwise order around the origin as described above, find a relation among the lengths of those vectors.

We now  write the $m_j$ (\refdef{m_j}) for $1 \leq j \leq n-1$ in terms of the $z_j$: 
\begin{equation}
\label{Eqn:m_from_z}
m_j = \sqrt{ \left( \kappa_j + 1 \right) \left( {\kappa}_{j-1}+1 \right) -1} 
= \sqrt{|z_j|^2|z_{j-1}|^2-1}
\end{equation}

Since $\Im \left( z_{j-1} \overline{z_j} \right) = 1$, it follows that $\Re \left( z_{j-1} \overline{z_j} \right) = \pm m_j$.
But we saw above that the $\arg z_j$ form a decreasing sequence in $(0, \pi/2]$, so $0 < \arg (z_{j-1} \overline{z_j}) < \pi/2$. Hence each $z_{j-1} \overline{z_j}$ has positive real part, and we obtain
\[
z_{j-1} \overline{z_j} = m_j + i.
\]

We now observe that the desired equations \refeqn{mquationOdd} and \refeqn{mquationEven} contain the products
\begin{align*}
\prod_{j=1}^{n-1} \left( m_j - i \right) &= \prod_{j=1}^{n-1} \overline{z_{j-1}} z_j = \overline{z_0} \, \left| z_1 \cdots z_{n-2} \right|^2 \, z_{n-1}, \\
\prod_{j=1}^{n-1} \left( m_j + i \right) &= \prod_{j=1}^{n-1} z_{j-1} \overline{z_j} = z_0 \, \left| z_1 \cdots z_{n-2} \right|^2 \, \overline{z_{n-1}}.
\end{align*}
In the case of even $n$ then we find
\begin{align*}
\frac{i}{2} \left( \prod_{j=1}^{n-1} \left( m_j - i \right) - \prod_{j=1}^{n-1} \left( m_j + i \right) \right)
&= \frac{i}{2} \left| z_1 \cdots z_{n-2} \right|^2 \left( \overline{z_0} z_{n-1} - z_0 \overline{z_{n-1}} \right) \\
&= \left| z_1 \cdots z_{n-2} \right|^2 \, \Im \left( \overline{z_0} z_{n-1} \right) \\
&= \prod_{j=1}^{\frac{n-2}{2}} \left( m_{2j}^2 + 1 \right),
\end{align*}
where in the second line we used the fact that $(\alpha - \overline{\alpha}) = 2i \, \Im(\alpha)$ for any $\alpha \in \C$, and in the third line we used $\Im \left( z_0 \overline{z_{n-1}} \right) = -1$ and $m_{2j}^2 + 1 = |z_{2j-1}|^2 \, |z_{2j}|^2$ from \refeqn{m_from_z}. 

The odd case is similar, with an extra factor of $m_0^2 = \kappa_0 + 1 = |z_0|^2 $ appearing.
This proves the result with $\kappa_\infty$ set to $1$. Upon reversing the original dilation, we replace each $\kappa_j$ with $\frac{\kappa_j}{\kappa_\infty}$, completing the proof.

\small 

\bibliographystyle{amsplain}
\bibliography{refs.bib}

\providecommand{\bysame}{\leavevmode\hbox to3em{\hrulefill}\thinspace}
\providecommand{\MR}{\relax\ifhmode\unskip\space\fi MR }
\providecommand{\MRhref}[2]{%
  \href{http://www.ams.org/mathscinet-getitem?mr=#1}{#2}
}
\providecommand{\href}[2]{#2}
\begin{thebibliography}{10}

\bibitem{Soddy_Gosset_nature}
\emph{The {K}iss {P}recise}, Nature \textbf{139} (1937), no.~3506, 62.

\bibitem{StephensonAharonov}
Dov Aharonov and Kenneth Stephenson, \emph{Geometric sequences of discs in the
  {A}pollonian packing}, St. Petersburg Mathematical Journal \textbf{9} (1998),
  1--42.

\bibitem{Beecroft_1842}
Philip Beecroft, \emph{Properties of circles in mutual contact}, The {L}ady's
  and {G}entleman's {D}iary \textbf{139} (1842), 91--96.

\bibitem{Bos2010}
Erik-Jan Bos, \emph{Princess {E}lizabeth of {B}ohemia and {D}escartes' letters
  (1650--1665)}, Historia Math. \textbf{37} (2010), no.~3, 485--502, With an
  appendix, containing a letter of Princess Elizabeth to Theodore Haak (dual
  French-English text). \MR{2671791}

\bibitem{Ford_1938}
L.~R. Ford, \emph{Fractions}, Amer. Math. Monthly \textbf{45} (1938), no.~9,
  586--601. \MR{1524411}

\bibitem{GLMWY_2003}
Ronald~L. Graham, Jeffrey~C. Lagarias, Colin~L. Mallows, Allan~R. Wilks, and
  Catherine~H. Yan, \emph{Apollonian circle packings: number theory}, J. Number
  Theory \textbf{100} (2003), no.~1, 1--45. \MR{1971245}

\bibitem{HKRS_local-global}
Summer Haag, Clyde Kertzer, James Rickards, and Katherine~E. Stange, \emph{The
  {L}ocal-{G}lobal {C}onjecture for {A}pollonian circle packings is false},
  arxiv:2307.02749.

\bibitem{Kocik_2006}
Jerzy Kocik, \emph{Clifford algebras and euclid's parametrization of
  pythagorean triples}, Advances in Applied Clifford Algebras \textbf{17}
  (2006), no.~1, 71--93.

\bibitem{kocik2007theorem}
\bysame, \emph{A theorem on circle configurations}, arxiv:0706.0372, 2007.

\bibitem{kocik2019spinors}
\bysame, \emph{Spinors and {D}escartes configurations of disks},
  arxiv:1909.06994, 2019.

\bibitem{kocik2020apollonian}
\bysame, \emph{Apollonian depth, spinors, and the super-{D}edekind
  tessellation}, arxiv:2009.02680, 2020.

\bibitem{kocik2020spinors}
\bysame, \emph{Spinors, lattices, and classification of integral {A}pollonian
  disk packings}, arxiv:2001.05866, 2020.

\bibitem{kocik2021integral}
\bysame, \emph{Integral spinors, {A}pollonian disk packings, and {D}escartes
  groups}, arxiv:2105.12950, 2021.

\bibitem{LagariasMallowsWilks}
Jeffrey~C. Lagarias, Colin~L. Mallows, and Allan~R. Wilks, \emph{Beyond the
  {D}escartes {C}ircle {T}heorem}, The American Mathematical Monthly
  \textbf{109} (2002), no.~4, 338--361.

\bibitem{Mathews_Spinors_horospheres}
Daniel~V. Mathews, \emph{Spinors and horospheres}, arxiv:2308.09233.

\bibitem{Mauldon_1962}
J.~G. Mauldon, \emph{Sets of equally inclined spheres}, Canadian J. Math.
  \textbf{14} (1962), 509--516. \MR{142031}

\bibitem{Michiwaki_Japanese_geometry}
Yoshimasa Michiwaki, \emph{Geometry in {J}apanese mathematics}, Encyclopaedia
  of the {H}istory of {S}cience, {T}echnology, and {M}edicine in
  {N}on-{W}estern {C}ultures, Springer, Dordrecht, 2008, pp.~1018--1019.

\bibitem{Nye_Princess}
Andrea Nye, \emph{The {P}rincess and the {P}hilosopher: {L}etters of
  {E}lisabeth of the {P}alatine to {R}en\'{e} {D}escartes}, Rowman \&
  Littlefield, 1999.

\bibitem{PennerPunctured}
R.~C. Penner, \emph{{The decorated Teichmüller space of punctured surfaces}},
  Communications in Mathematical Physics \textbf{113} (1987), no.~2, 299 --
  339.

\bibitem{PenroseRindler}
Roger Penrose and Wolfgang Rindler, \emph{Spinors and space-time}, Cambridge
  University Press, 1984.

\bibitem{Shapiro_correspondence}
Lisa Shapiro, Ren\'{e} Descartes, and Princess~Elisabeth of~Bohemia, \emph{The
  correspondence between {P}rincess {E}lisabeth of {B}ohemia and {R}ene\'{e}
  {D}escartes}, University of Chicago Press, 2007.

\bibitem{Soddy_1936}
Frederick Soddy, \emph{The {K}iss {P}recise}, Nature \textbf{137} (1936),
  no.~3477, 1021.

\bibitem{Soddy_Hexlet}
\bysame, \emph{The {B}owl of {I}ntegers and the {H}exlet}, Nature \textbf{139}
  (1937), no.~3506, 77--79.

\bibitem{Steiner_1826}
J.~Steiner, \emph{Fortsetzung der geometrischen {B}etrachtungen ({H}eft 2, {S}.
  161)}, J. Reine Angew. Math. \textbf{1} (1826), 252--288. \MR{1577615}

\bibitem{StephensonKenneth2005Itcp}
Kenneth Stephenson, \emph{Introduction to circle packing: the theory of
  discrete analytic functions}, Cambridge University Press, New York, 2005
  (eng).

\end{thebibliography}

\end{document}